\documentclass[12pt]{article}
\usepackage{graphicx} 
\usepackage{enumitem}
\usepackage{xcolor}
\usepackage{latexsym,amsmath}
\usepackage{lscape}
\usepackage{color}
\usepackage[pagewise]{lineno}
\usepackage{graphicx} 
\usepackage {epstopdf}
\usepackage {subfigure}
\usepackage{hyperref}
\usepackage{booktabs} 
\usepackage{amsmath}
\usepackage{multicol}
\usepackage{multirow}
\usepackage{xcolor}
\usepackage{float}
\usepackage{amsthm}
\usepackage{amsfonts}
\usepackage[margin=1in]{geometry}
\usepackage[toc]{appendix}
\usepackage{stackengine}
\usepackage{bbold}
\RequirePackage[OT1]{fontenc}
\usepackage[authoryear]{natbib}
\usepackage{adjustbox}
\usepackage{amsmath,amssymb,amsthm,bm,latexsym}
\usepackage{graphicx,psfrag,epsf}
\usepackage[ruled,vlined]{algorithm2e}
\usepackage{epigraph,xcolor}
\usepackage{url}
\usepackage{booktabs}
\usepackage{multirow}
\usepackage{hyperref}
\usepackage{pifont}
\hypersetup{colorlinks=true,citecolor=blue,
	pdfpagemode=FullScreen,linktocpage=true}
\usepackage{cleveref}
\usepackage{amsfonts}
\usepackage{amsmath}
\usepackage{orcidlink}
\usepackage{amsthm}
\usepackage[T1]{fontenc}
\usepackage{multirow}
\usepackage{array}
\usepackage{booktabs}
\usepackage{accents}
\usepackage{cleveref}

\newtheorem{lemma}{Lemma}[section]

\newtheorem{theorem}{Theorem}[section]
\newtheorem{proposition}{Proposition}[section]
\newtheorem{assumption}{Assumption}[section]

\DeclareMathOperator{\Var}{Var}
\DeclareMathOperator{\Cov}{Cov}

\def\E{\mathbb{E}}
\def\Cov{{\textrm{Cov}}\,}
\def\R{\mathbb{R}}
\def\calN{\mathcal{N}}

\newcommand{\tran}{^\intercal}
\newcommand{\rd}{{\mathrm{d}}}
\newcommand{\EE}[2][]{\mathbb{E}_{#1}\left[#2\right]}
\newcommand{\ep}{\varepsilon}

\newcommand{\Indc}[1]{{\mathbf{1}\left\{{#1}\right\}}}

\newcommand{\RNum}[1]{\uppercase\expandafter{\romannumeral #1\relax}}

\newcommand{\norm}[1]{\left\| #1 \right\|}

\newcommand{\Rom}[1]{\uppercase\expandafter{\romannumeral #1\relax}}
\newcommand{\rom}[1]{\lowercase\expandafter{\romannumeral #1\relax}}

\newtheoremstyle{general}
{3mm} 
{3mm} 
{\it} 
{} 
{\bfseries} 
{.} 
{.5em} 
{} 

\theoremstyle{general}

\newtheorem{example}{Example}[section]

\newcommand{\eqd}{\stackrel{d}{=}}
\newcommand{\cv}{{\mathrm{CV}}}

\usepackage[most]{tcolorbox}

\definecolor{boxblue}{RGB}{235,242,250}
\definecolor{boxframe}{RGB}{70,105,145}

\newtcolorbox[auto counter, number within=section]{assumptionbox}[2][]{
  colback=blue!3,
  colframe=blue!50!black,
  fonttitle=\bfseries,
  coltitle=blue!50!black,
  colbacktitle=blue!8,
  title={Assumption~\thetcbcounter: #2},
  enhanced,
  breakable,
  attach boxed title to top left={xshift=0pt, yshift=-2pt},
  #1
}

\title{On the optimality of antithetic randomization for cross-validation}
\author{
  Srijan Chattopadhyay\textsuperscript{1}, 
  Sifan Liu\textsuperscript{2}, 
  Snigdha Panigrahi\textsuperscript{1}\thanks{The author gratefully acknowledges support from NSF CAREER Award DMS-2337882.} \\
  \vspace{0.2cm}\\
  \textsuperscript{1}Department of Statistics, University of Michigan, MI, USA \\
  \texttt{srijanch@umich.edu, psnigdha@umich.edu} \\
  \vspace{0.2cm}\\
  \textsuperscript{2}Department of Statistical Science, Duke University, NC, USA \\
  \texttt{sifan.liu@duke.edu}
}
\date{}

\begin{document}

\maketitle

\begin{abstract}
In the classical normal means problem, independent train--test folds can be constructed by perturbing the data with normal randomization. Averaging over $K$ such folds yields a cross-validation estimator whose bias depends on the marginal distribution of the randomization variables, while its variance depends on their joint distribution. This raises the questions: which joint law is optimal, and how to construct the corresponding randomization scheme? We show that: (i) for smooth estimators, antithetic randomization with pairwise correlation $\rho=-1/(K-1)$ is necessary and sufficient for the reducible variance due to randomization to remain bounded as the bias vanishes; (ii) a general construction yields a class of antithetic schemes, within which the jointly normal scheme is minimax optimal; and (iii) for non-smooth estimators with finitely many jump discontinuities, antithetic randomization improves the asymptotic rate of the reducible variance, while a simple control variate restores bounded variance when the discontinuities are known.
\end{abstract}

\textbf{Keywords:} antithetic sampling, risk estimation, prediction, variance reduction

\section{Introduction}
\label{sec: introduction}

Cross-validation is a standard method for estimating the test or prediction error of a fitted model. 
However, traditional cross-validation based on sample splitting typically relies on the assumption that the samples are independent and identically distributed (i.i.d.).
This assumption fails for clustered, longitudinal, or heterogeneous data, and does not apply in fixed-design problems, where sample splitting does not generate independent train--test folds.
Alternatively, one can split the information contained in the data rather than the samples themselves.
A broadly applicable implementation of this idea is feasible when the fitted model or estimator depends on the data through a sufficient statistic that is normal or asymptotically normal.
In this setting, independent train--test views of the data can be constructed by perturbing the statistic with normal randomization variables.
Repeating this construction $K$ times and averaging the resulting estimates yields an estimate of prediction error, directly paralleling traditional $K$-fold cross-validation.

Unlike traditional cross-validation, in which the chosen sample partition determines the dependence among folds, the construction of independent train--test views described above requires only randomization variables with normal marginals.
Obviously, these conditions do not uniquely determine the joint law of the randomization variables.
Importantly, as shown in this paper, the joint distribution of the randomization variables controls the reducible variance of the resulting cross-validation estimator and directly affects its performance.
This naturally raises the question of which joint law is optimal and how to construct the corresponding randomization scheme.

A classical strategy for variance reduction is to replace independent replicates with negatively correlated antithetic replicates, allowing their fluctuations to cancel when averaged. 
Antithetic sampling and quasi-Monte Carlo methods are standard implementations of this principle, and are applied in Monte Carlo simulation and generative modeling~\citep{hammersley1956new, owen2013monte, jia2026antithetic}.
Motivated by this strategy, \citet{liu2024cross} propose an antithetic randomization scheme for cross-validation, in which the $K$ randomization variables have pairwise correlation $-1/(K-1)$, and report substantially lower variance than the coupled bootstrap estimator based on independent randomization~\citep{oliveira2024unbiased}.

This paper asks whether antithetic randomization is optimal within the broader class of randomization schemes with normal marginals.
Because the same pairwise correlation can arise from different joint distributions, we seek to identify and construct the best-performing antithetic randomization scheme.
Our main contributions are summarized as follows:
\begin{enumerate}[leftmargin=*]
\item For smooth estimators, \Cref{thm:alpha to 0} in \Cref{sec: zero-sum} shows that antithetic randomization is not only sufficient but also necessary for the reducible variance of the cross-validation estimator to remain bounded as the perturbation level goes to zero, in which case the bias vanishes.
\item \Cref{lem: recipe:AR} in \Cref{sec: minimax} provides a general recipe for constructing antithetic randomization schemes, and \Cref{thm:minimax} establishes that the jointly normal antithetic scheme is the unique minimax-optimal construction within this class.
\item For non-smooth estimators with finitely many jump discontinuities, \Cref{thm:reducible-variance-nonsmooth} in \Cref{sec: nonsmooth} shows that antithetic randomization yields lower reducible variance than any non-antithetic scheme. Moreover, when the discontinuities are known in closed form, adding a suitable control variate restores the bounded reducible variance otherwise available only for smooth estimators.
\end{enumerate}

\section{Prediction error and randomization schemes}

\subsection{Problem setup and estimand}

We focus on the normal means problem as a representative setting; extensions to estimators depending on asymptotically normal sufficient statistics follow by the techniques of \citet{liu2024cross}.

Suppose we observe $Y\sim\calN(\mu,\sigma^2 I_n)$, where $\mu\in\R^n$ is the parameter of interest, and let $g:\R^n\to\R^n$ be an estimator of $\mu$.
Our goal is to estimate the prediction error
\begin{equation}
\label{eqn: pred:error}
\text{PE}(g)= \mathbb{E}\left[ \|g(Y) - \widetilde{Y}\|_2^2\right],
\end{equation}
where $\widetilde{Y}\sim \calN(\mu, \sigma^2 I_n)$ is an unobserved independent copy of $Y$, and the expectation is over both $Y$ and $\widetilde{Y}$.
When external predictors are present, $g(Y)=g_X(Y)$ with $X$ fixed; throughout, we omit $X$ for simplicity.
Since $\text{PE}(g)$ differs from the risk $\mathbb{E}[\|g(Y)-\mu\|_2^2]$ only by the constant $n\sigma^2$, the estimators considered here apply to either estimand.

\subsection{Cross-validation estimator}

The training error $\|g(Y)-Y\|_2^2$ is known to underestimate the prediction error in \eqref{eqn: pred:error}; but with a single observation, sample splitting is not feasible.
The idea instead is to split the information in $Y$ into two independent views, and to repeat the construction to obtain exchangeable train--test folds $(Y^k_{\text{train}}, Y^k_{\text{test}})$ for $k\in[K]=\{1,\ldots,K\}$.

For normal data with known $\sigma^2$, the two views are obtained by perturbing $Y$ with an independent normal randomization vector.
Specifically, let $\omega^k\sim\calN(0,\sigma^2I_n)$ be independent of $Y$, and fix $\alpha>0$; then the train--test folds
\begin{align}
\label{eqn: train-test}
    Y^k_{\text{train}} = Y + \sqrt\alpha\, \omega^k \quad \text{and}\quad  Y^k_{\text{test}} = Y - \frac{1}{\sqrt\alpha}\omega^k
\end{align}
are jointly normal with zero cross-covariance, and hence independent.
Both folds have the same mean as $Y$, but inflated variances, $(1+\alpha)\sigma^2$ and $(1+\alpha^{-1})\sigma^2$, where $\alpha$ controls how much information is allocated to training versus how much is reserved for testing.

Fitting $g$ on the train fold and evaluating it on the test fold, \citet{oliveira2024unbiased} estimate the prediction error by
\begin{equation}
\label{eqn: CVest}
\cv_{\alpha}= \frac{1}{K}\sum_{k=1}^K\Big\{ \big\|g(Y^{k}_{\text{train}}) - Y^{k}_{\text{test}}\big\|_2^2 -\frac{1}{\alpha}\|\omega^{k}\|_2^2 \Big\},
\end{equation}
where the first term averages predictive performance over the $K$ folds and the second corrects for the excess variance of $Y^k_{\text{test}}$ (relative to $\widetilde Y$); they show that its bias vanishes as $\alpha\downarrow0$.
For weakly differentiable $g$, the small-$\alpha$ limit of $\mathbb E(\cv_\alpha\mid Y)$ is
\[
\|Y-g(Y)\|_2^2+2\sigma^2\operatorname{tr}\{\nabla g(Y)\},
\]
which coincides with the Stein's unbiased risk estimate (SURE)~\citep{stein1981estimation} for prediction error. Thus, $\cv_\alpha$ can be viewed as a randomized analogue of SURE; see \citet{liu2024cross} for further discussion of this connection.

Each summand of \eqref{eqn: CVest} depends on the randomization only through $\omega^k$, so the expectation, and hence the bias, of $\cv_\alpha$ is determined by the common marginal law of the $\omega^k$ alone.
While the coupled bootstrap of \citet{oliveira2024unbiased} takes these randomization variables to be independent, $\cv_\alpha$ can be constructed from any scheme with normal marginals.
The construction in \eqref{eqn: train-test} is also known as data thinning or fission~\citep{neufeld2024data, leiner2025data}, and has been used in selective inference to split information between selection and inference~\citep{tian2018selective, rasines2023splitting, perry2026post}.
When $\sigma^2$ is unknown, it is replaced by a consistent estimator, as is standard in such splitting schemes; \citet{liu2024cross} demonstrate this plug-in approach for cross-validation.

\subsection{Normal randomization schemes}

The independence of $(Y^k_{\mathrm{train}},Y^k_{\mathrm{test}})$ requires only marginal normality of $\omega^k$, and places no constraint on the joint distribution of $(\omega^1,\ldots,\omega^K)$.
We therefore study the following class of schemes, assuming throughout that $(\omega^1,\ldots,\omega^K)$ is independent of $Y$ and that $K\geq2$.

\begin{assumption}[Exchangeable normal randomization]
\label{assump:normal-randomization}
  \begin{enumerate}
    \item[(i)]\label{assump:exch} \emph{Exchangeability:} \sloppy{$(\omega^{\pi(1)}, \ldots, \omega^{\pi(K)}) \eqd (\omega^1, \ldots, \omega^K)$} for every permutation $\pi$ of $[K]=\{1,\ldots,K\}$.
    \item[(ii)]\label{assump:marg} \emph{Marginal normality and equicorrelation:} $\omega^k\sim\calN(0,\sigma^2 I_n)$ for every $k\in[K]$, and $\Cov(\omega^k,\omega^{k'}) = \rho\sigma^2 I_n$ for every $k\neq k'$, where $\rho\in[-1/(K-1),1]$.
  \end{enumerate}
\end{assumption}

Part (i) ensures that, as in traditional cross-validation, the $K$ train--test pairs are exchangeable, whereas part (ii) captures the dependence between randomization variables through a single free parameter $\rho$.
The stated range of $\rho$ is exactly what makes the covariance matrix of $(\omega^1,\ldots,\omega^K)$ positive semidefinite.
At the lower endpoint $\rho=-1/(K-1)$, the randomization variables satisfy $\sum_{k=1}^K\omega^k=0$ almost surely; we call such a scheme, and the corresponding correlation, \emph{antithetic}.

Two existing estimators are special cases: the coupled bootstrap estimator of \citet{oliveira2024unbiased} takes $\rho=0$, and the cross-validation estimator of \citet{liu2024cross} takes $\rho=-1/(K-1)$.
The multi-fold thinning estimator of \citet{neufeld2024data}, although close in spirit, falls outside the class as its folds do not share the mean of $Y$.

Because the bias of $\cv_\alpha$ is determined by the common marginal law alone, every scheme satisfying Assumption~\ref{assump:normal-randomization} yields an estimator with the same bias; the schemes differ only in the variance that the randomization induces. \Cref{sec: zero-sum,sec: minimax,sec: nonsmooth} examine how this variance depends on the joint law of $(\omega^1,\ldots,\omega^K)$ in the small-bias regime $\alpha\downarrow0$, while $n$, $K$, and $\rho$ are held fixed.

\section{Necessity of antithetic randomization}
\label{sec: zero-sum}

We assess the estimator in~\eqref{eqn: CVest} through its mean squared error (MSE), which admits the decomposition:
\begin{align*}
\mathbb{E} \left[ \big(\cv_{\alpha} - \text{PE}(g) \big)^2 \right]
&= \underbrace{\left\{ \mathbb{E} [\cv_{\alpha}] - \text{PE}(g) \right\}^2}_{\text{Bias$^2$}} +  \underbrace{\Var \left[ \mathbb{E} [\cv_{\alpha} \mid Y] \right]}_{\text{S-VAR}} + \underbrace{\mathbb{E} \left[ \Var [\cv_{\alpha} \mid Y] \right]}_{\text{R-VAR}}.
\end{align*}
Here, $\mathrm{S\text{-}VAR}$ is the irreducible variance due to the sampling variance of $Y$, whereas $\mathrm{R\text{-}VAR}$ is the reducible variance contributed by the randomization variables $(\omega^1,\ldots,\omega^K)$.

Under Assumption~\ref{assump:normal-randomization}, both the bias and $\mathrm{S\text{-}VAR}$ depend only on $\alpha$ and on the common marginal distribution of the $\omega^k$, and the bias vanishes as $\alpha\downarrow0$ under mild integrability conditions~\citep{oliveira2024unbiased}.
By contrast, $\mathrm{R\text{-}VAR}$ depends on their joint distribution.
We therefore ask: for which dependence structures does $\mathrm{R\text{-}VAR}$ remain bounded as $\alpha \downarrow 0$, i.e., as the bias vanishes?
Within the class of Assumption~\ref{assump:normal-randomization}, \Cref{thm:alpha to 0} below answers this question completely: the reducible variance remains bounded if and only if the randomization is antithetic. The result requires the following regularity conditions on $g$.

\begin{assumption}[Weak differentiability and moments at an inflated variance]
\label{assump:weakdiff}
    All components $g_i$ ($1 \le i \le n$) of $g$ are weakly differentiable. That is, there exists a function $\nabla g_i : \mathbb{R}^n \to \mathbb{R}^n$, the weak derivative of $g_i$, such that
    $$
g_i(y + z) - g_i(y) = \int_{0}^{1} z \cdot \nabla g_i(y + tz)\mathrm{d}t,
$$
for almost all $y, z \in \mathbb{R}^n$. Denote the Jacobian matrix of $g$ as $\nabla g \in \mathbb{R}^{n \times n}$, where the $i$-th row is equal to $\nabla g_i$.
In addition, assume that there exists $\alpha_0>0$ such that, for
$Z_{\alpha_0}\sim\calN(\mu,(1+\alpha_0)\sigma^2I_n)$,
$\mathbb{E}\norm{g(Z_{\alpha_0})}_2^4 < \infty$,
$\mathbb{E}\norm{\nabla g(Z_{\alpha_0})}_{\mathrm{F}}^2 < \infty$.

\end{assumption}

\begin{theorem}[Necessity and sufficiency of antithetic randomization]
\label{thm:alpha to 0}
Suppose that $(\omega^1,\ldots,\omega^K)$ satisfy Assumption~\ref{assump:normal-randomization} for some fixed $\rho$, and that $g$ satisfies Assumption~\ref{assump:weakdiff} and \sloppy{$\EE{\|g(Y)-Y\|^2_2}>0$}.
Then, as $\alpha\downarrow 0$,
\[
\mathbb{E} \left[\Var \!\left(\cv_{\alpha} \mid Y\right)\right] =\begin{cases}
O(1) &\text{ if } \rho=-\frac{1}{K-1},\\
\Theta(\frac{1}{\alpha}) &\text{ if } \rho>-\frac{1}{K-1}.
\end{cases}
\]
\end{theorem}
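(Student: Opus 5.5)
Expand each summand of $\cv_\alpha$ in powers of $\sqrt\alpha$ and isolate the one term that blows up. Writing $g^k:=g(Y+\sqrt\alpha\,\omega^k)$, we have
\[
\big\|g^k - Y + \alpha^{-1/2}\omega^k\big\|_2^2 - \alpha^{-1}\|\omega^k\|_2^2
= \|g^k - Y\|_2^2 + \frac{2}{\sqrt\alpha}\langle g^k - Y,\omega^k\rangle .
\]
The $\alpha^{-1}\|\omega^k\|^2$ terms cancel exactly. Next split $g^k = g(Y) + (g^k - g(Y))$, which gives
\[
\cv_\alpha = A_\alpha + \frac{2}{\sqrt\alpha}\Big\langle g(Y)-Y,\ \frac1K\sum_{k}\omega^k\Big\rangle + \frac{2}{\sqrt\alpha}\cdot\frac1K\sum_k\langle g^k-g(Y),\omega^k\rangle ,
\]
where $A_\alpha=\frac1K\sum_k\|g^k-Y\|^2$. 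I will show that $A_\alpha$ and the last term have conditional variances whose expectations are $O(1)$. The middle term then carries the whole dichotomy.

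\textbf{Bounded pieces.} For the last term, use the weak-differentiability identity in Assumption~\ref{assump:weakdiff} with $z=\sqrt\alpha\,\omega^k$:
\[
\frac{1}{\sqrt\alpha}\langle g^k-g(Y),\omega^k\rangle=\int_0^1 (\omega^k)^\top\nabla g(Y+t\sqrt\alpha\,\omega^k)\,\omega^k\,\rd t .
\]
Its second moment is bounded by Cauchy--Schwarz in terms of $\E\|\omega\|^4$ and $\sup_{t}\E\|\nabla g(Y+t\sqrt\alpha\,\omega)\|_{\mathrm F}^2$. The fourth-moment condition on $\omega$ is harmless, since $\omega$ is Gaussian; I will handle the coupling of $\omega$ and $\nabla g$ with Hölder's inequality, or by conditioning on the direction of $\omega$. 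The main obstacle is controlling $\nabla g$ at the random point $Y+t\sqrt\alpha\,\omega^k$ uniformly in $t\in[0,1]$ and small $\alpha$. For this I will use a Gaussian change-of-measure argument. The law of $Y+s\omega$ is $\calN(\mu,(1+s^2)\sigma^2 I)$. Conditioning on $\omega$, or bounding the joint density of $(Y+t\sqrt\alpha\,\omega,\omega)$ by a constant times the product of a $\calN(\mu,(1+\alpha_0)\sigma^2I)$ density and a slightly wider Gaussian density, reduces all such expectations to moments under $Z_{\alpha_0}$ once $\alpha<\alpha_0$. The same device gives $\E\|g^k\|^4\le C\,\E\|g(Z_{\alpha_0})\|^4<\infty$, so $\E\Var(A_\alpha\mid Y)\le \E A_\alpha^2=O(1)$. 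Since $\Var(X_1+X_2+X_3)\le 3\sum\Var(X_i)$, only the middle term remains to be analysed for the upper bound.

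\textbf{Middle term.} Under Assumption~\ref{assump:normal-randomization}, $\bar\omega=\frac1K\sum_k\omega^k$ has covariance $\frac{\sigma^2}{K}(1+(K-1)\rho)I_n$ and is independent of $Y$. Hence
\[
\E\,\Var\!\Big(\frac{2}{\sqrt\alpha}\langle g(Y)-Y,\bar\omega\rangle\,\Big|\,Y\Big)=\frac{4\sigma^2(1+(K-1)\rho)}{\alpha K}\,\E\|g(Y)-Y\|^2 .
\]
If $\rho=-1/(K-1)$ then $\bar\omega=0$ almost surely, the term vanishes, and R-VAR $=O(1)$. If $\rho>-1/(K-1)$, this quantity is $c/\alpha$ with $c>0$ because $\E\|g(Y)-Y\|^2>0$. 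Writing $\cv_\alpha=M_\alpha+B_\alpha$, where $M_\alpha$ is the middle term and $\E\Var(B_\alpha\mid Y)=O(1)$, I apply the conditional Cauchy--Schwarz inequality
\[
\sqrt{\Var(M+B\mid Y)}\ge\sqrt{\Var(M\mid Y)}-\sqrt{\Var(B\mid Y)}
\]
and then Minkowski's inequality in $L^2(Y)$. This gives
\[
\sqrt{\E\Var(\cv_\alpha\mid Y)}\ge\sqrt{c/\alpha}-O(1),
\]
hence $\E\Var(\cv_\alpha\mid Y)=\Omega(1/\alpha)$. The matching upper bound $O(1/\alpha)$ follows from the same decomposition, and together these give $\Theta(1/\alpha)$.
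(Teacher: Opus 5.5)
Your proof is correct and follows essentially the paper's route. It uses the same split $\cv_\alpha=R_\alpha+T_\alpha$ (your $A_\alpha$ plus the last term is the paper's $R_\alpha$), a uniform bound on $\E R_\alpha^2$ obtained by a Gaussian change of measure to $Z_{\alpha_0}$, the exact identity $\E\Var(T_\alpha\mid Y)=c/\alpha$, and a Cauchy--Schwarz-type lower bound; your reverse-triangle/Minkowski step is equivalent to the paper's bound on $\E\operatorname{Cov}(R_\alpha,T_\alpha\mid Y)$.

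The differences are minor. The paper gets the bound on $R_\alpha$ as a by-product of the stronger convergence $R_\alpha\to R_0$ in $L_2$ (Lemma~\ref{lem:smooth-small-noise-limit}). Your joint-density domination is the right device for decoupling $\|\omega\|^4$ from $\|\nabla g\|_{\mathrm F}^2$; plain H\"older would need more than a second moment of $\nabla g$, so rely on that device rather than on the Cauchy--Schwarz/H\"older remark.
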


The proof is given in Appendix~\ref{prf:thm:alpha to 0}, where the nondegeneracy condition $\E\norm{g(Y)-Y}_2^2>0$ is used only in the case $\rho>-1/(K-1)$.
Theorem~\ref{thm:alpha to 0} strengthens the results of \citet{liu2024cross} in two respects. First, it establishes bounded reducible variance under antithetic randomization without assuming pairwise or joint normality of the randomization variables. Second, it proves the converse: among all marginally normal schemes satisfying Assumption~\ref{assump:normal-randomization}, bounded reducible variance is attainable only at the antithetic correlation $\rho=-1/(K-1)$.

\section{Minimax optimal antithetic randomization}
\label{sec: minimax}

Section~\ref{sec: zero-sum} shows that, under Assumption~\ref{assump:normal-randomization}, antithetic correlation is the only correlation structure that keeps the reducible variance bounded.
This constrains the pairwise correlation of $(\omega^1,\ldots,\omega^K)$ but leaves their joint law free: many joint distributions with normal marginals satisfy the equivalent zero-sum constraint, and the jointly normal scheme of \citet{liu2024cross} is only one of them. We therefore ask whether some other antithetic scheme achieves smaller reducible variance.

We begin with a general recipe for constructing antithetic randomization schemes; the proof of the following proposition is deferred to Appendix~\ref{prf:lem:recipe:AR}.

\begin{proposition}[A general construction of antithetic randomization schemes]
\label{lem: recipe:AR}
Let $Z\sim\calN(0,\sigma^2I_d)$ with $d\geq n$, and let $M=(M^1,\ldots,M^K)$ be an exchangeable family of random matrices in $\R^{n\times d}$, independent of $Z$, satisfying
\begin{enumerate}[label=(\roman*)]
    \item\label{cond:co-isometry} \emph{co-isometry:} $M^k(M^k)\tran=I_n$ for every $k\in[K]$;
    \item\label{cond:sum-zero-matrices} \emph{sum-zero:} $\displaystyle\sum_{k=1}^K M^k=0$.
\end{enumerate}
Define $\omega^k=M^kZ$ for $k\in[K]$.
Then the stacked vector $(\omega^1,\ldots,\omega^K)\in\R^{nK}$ satisfies Assumption~\ref{assump:normal-randomization} with $\rho=-1/(K-1)$ or, equivalently, the zero-sum constraint $\sum_{k=1}^K\omega^k=0$.
\end{proposition}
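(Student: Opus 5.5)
We verify the three requirements of Assumption~\ref{assump:normal-randomization}, together with the zero-sum property, directly from the construction $\omega^k=M^kZ$. Throughout we condition on $M$ and use the independence of $M$ and $Z$.

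\textbf{Zero-sum.} This is immediate from condition~\ref{cond:sum-zero-matrices}:
\[
\sum_{k=1}^K\omega^k=\Big(\sum_{k=1}^K M^k\Big)Z=0
\]
almost surely.

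\textbf{Marginal normality.} Fix $k$. Conditionally on $M$, the vector $\omega^k=M^kZ$ is a linear image of $Z\sim\calN(0,\sigma^2I_d)$. It is therefore Gaussian with mean zero and covariance $\sigma^2M^k(M^k)\tran=\sigma^2I_n$, by condition~\ref{cond:co-isometry}. This conditional law does not depend on $M$, so integrating over $M$ shows that $\omega^k\sim\calN(0,\sigma^2I_n)$ unconditionally. The same argument also shows that $\omega^k$ is independent of $M$, although we do not need this.

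\textbf{Exchangeability.} For a permutation $\pi$, the vector $(\omega^{\pi(1)},\ldots,\omega^{\pi(K)})$ equals $\Phi(M^{\pi(1)},\ldots,M^{\pi(K)},Z)$, where $\Phi$ is the fixed measurable map $(m^1,\ldots,m^K,z)\mapsto(m^1z,\ldots,m^Kz)$. Since $M$ is exchangeable and independent of $Z$, we have
\[
(M^{\pi(1)},\ldots,M^{\pi(K)},Z)\eqd(M^1,\ldots,M^K,Z).
\]
Applying $\Phi$ to both sides gives exchangeability of the $\omega^k$.

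\textbf{Equicorrelation.} By exchangeability, there is a common matrix $C=\Cov(\omega^k,\omega^{k'})$ for all $k\neq k'$. Taking the covariance of the zero-sum identity with $\omega^1$ gives
\[
0=\Cov\Big(\omega^1,\sum_{k=1}^K\omega^k\Big)=\sigma^2I_n+(K-1)C,
\]
so $C=-\sigma^2 I_n/(K-1)$. This is exactly the antithetic value $\rho=-1/(K-1)$.

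One subtlety is that exchangeability only makes $C$ equal to its transpose-free version up to symmetry, since $\Cov(\omega^1,\omega^2)=\Cov(\omega^2,\omega^1)=C\tran$. The identity above yields the value of $C$ directly, so this causes no difficulty.

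\textbf{Equivalence with the zero-sum constraint.} For the stated equivalence, note the converse direction. Under exchangeability with these marginals, $\rho=-1/(K-1)$ gives
\[
\Var\Big(\sum_{k=1}^K\omega^k\Big)=K\sigma^2 I_n+K(K-1)\rho\,\sigma^2 I_n=0,
\]
hence $\sum_{k=1}^K\omega^k=0$ almost surely. This equivalence was already noted after Assumption~\ref{assump:normal-randomization}.

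The only real care point is the marginal-normality step. Joint normality of $(\omega^1,\ldots,\omega^K)$ is not claimed, because $M$ is random and mixing over $M$ generally destroys joint Gaussianity. The argument must therefore pass through conditioning on $M$ and observe that the conditional law of each single $\omega^k$ is free of $M$.
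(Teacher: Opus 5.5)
Your proof is correct and follows essentially the same route as the paper's: exchangeability inherited from $M$ (you spell out the joint-law argument with $Z$ that the paper leaves implicit), marginal normality from conditioning on $M$ and using the co-isometry, zero-sum from $\sum_k M^k=0$, and the cross-covariance determined by that identity. The only difference is that you use $\Cov(\omega^1,\sum_k\omega^k)=0$ where the paper uses $\Var(\sum_k\omega^k)=0$; your ``subtlety'' is moot, since exchangeability applied to the ordered pair $(2,1)$ already gives $\Cov(\omega^2,\omega^1)=\Cov(\omega^1,\omega^2)$, so $C$ is automatically symmetric.
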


The three examples below instantiate \Cref{lem: recipe:AR} with different exchangeable families $M$; the first recovers the jointly normal antithetic construction.

\begin{example}[Jointly normal construction]\label{ex:normal}
Let $Z^1,\ldots,Z^K\stackrel{\mathrm{iid}}{\sim}\calN(0,\sigma^2I_n)$ with average $\bar Z=K^{-1}\sum_{j=1}^K Z^j$, and let $Z\in\R^{nK}$ be the stacked vector of $Z^1,\ldots,Z^K$, so that $Z\sim\calN(0,\sigma^2I_{nK})$ and $d=nK$.
Let $M^k\in\R^{n\times d}$ be the scaled centering matrix determined by
$M^kZ=\{K/(K-1)\}^{1/2}(Z^k-\bar Z)$.
Applying a uniform random permutation to $(M^1,\ldots,M^K)$ makes the family exchangeable, and the conditions of \Cref{lem: recipe:AR} are then satisfied.
\end{example}

\begin{example}[Balanced Rademacher; $K$ even]\label{ex:rademacher}
Let $Z\sim\calN(0,\sigma^2I_n)$, and let $\eta$ be uniformly distributed over the balanced sign vectors $\{\eta\in\{\pm1\}^K:\sum_{k=1}^K\eta_k=0\}$, independently of $Z$.
Set $M^k=\eta_kI_n$, so that $\omega^k=\eta_kZ$.
This distribution is invariant under permutations of the coordinates of $\eta$, so $(M^1,\ldots,M^K)$ is exchangeable; each $M^k$ is orthogonal, and $\sum_{k=1}^K M^k=(\sum_{k=1}^K\eta_k)I_n=0$.
\end{example}

\begin{example}[Cyclic rotations; $K\ge2$, $n$ even]\label{ex:rotations}
Let $n=2m$ and $R=B^{\oplus m}\in O(n)$, where $B\in O(2)$ is the rotation through angle $2\pi/K$, so that $R^K=I_n$ and $\sum_{j=0}^{K-1}R^j=0$.
For $Z\sim\calN(0,\sigma^2I_n)$ and a uniform random permutation $\pi$ of $[K]$ independent of $Z$, set $M^k=R^{\pi(k)}$.
The permutation makes $(M^1,\ldots,M^K)$ exchangeable, while the orthogonality of $R$ and, since $R^K=I_n$, the identity $\sum_{k=1}^K M^k=\sum_{j=0}^{K-1}R^j=0$ verify the conditions of \Cref{lem: recipe:AR}.
\end{example}

The recipe of Proposition~\ref{lem: recipe:AR} therefore generates a rich class of antithetic schemes. The next result shows that the jointly normal construction of Example~\ref{ex:normal} is minimax optimal within this class; the proof is given in Appendix~\ref{prf:thm:minimax}.

\begin{theorem}[Minimax optimality of the jointly normal scheme]
\label{thm:minimax}
Let $\mathcal M$ denote the class of antithetic schemes constructed as per the procedure in Proposition~\ref{lem: recipe:AR}, and let
$\mathcal G=\left\{
g:\ g\text{ satisfies Assumption~\ref{assump:weakdiff} with }  \mathbb E\|\nabla g(Y)\|_{\mathrm F}^2=1\right\}$.
Then
\[
\inf_{M\in\mathcal M}\,
\sup_{g\in\mathcal G}\,
\lim_{\alpha\downarrow0}\,
\E \left[\operatorname{Var}(\cv_\alpha\mid Y)\right]=\frac{8\sigma^4}{K-1},
\]
and $M\in\mathcal M$ attains this value if and only if the stacked vector $(\omega^1,\ldots,\omega^K)$ is jointly normal.
\end{theorem}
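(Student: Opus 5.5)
The plan is to reduce the $\alpha\downarrow0$ limit of $\mathrm{R\text{-}VAR}$ to a Gaussian quadratic-form variance, then solve the resulting matrix extremal problem. I will show that the worst case over $g$ is at least $8\sigma^4/(K-1)$ for every $M$, with equality exactly when the conditional cross-covariances are deterministic, and that the jointly normal scheme attains this bound.

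\textbf{Step 1 (reduction to a quadratic form).} Expand each summand of $\cv_\alpha$ using \eqref{eqn: train-test}:
\[
\|g(Y+\sqrt\alpha\,\omega^k)-Y+\omega^k/\sqrt\alpha\|_2^2-\|\omega^k\|_2^2/\alpha
=\|g(Y+\sqrt\alpha\,\omega^k)-Y\|_2^2+\tfrac{2}{\sqrt\alpha}(\omega^k)\tran\{g(Y+\sqrt\alpha\,\omega^k)-Y\}.
\]
By Proposition~\ref{lem: recipe:AR}, $\sum_k\omega^k=0$, so the singular terms $\tfrac{2}{\sqrt\alpha}(\omega^k)\tran\{g(Y)-Y\}$ cancel after summing over $k$. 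Apply the weak-derivative identity of Assumption~\ref{assump:weakdiff} with the moment bounds at $\alpha_0$, exactly as in the proof of Theorem~\ref{thm:alpha to 0}. This gives, in $L^2$,
\[
\cv_\alpha=\|g(Y)-Y\|_2^2+\tfrac{2}{K}\sum_k(\omega^k)\tran A\,\omega^k+o(1),\qquad A=\nabla g(Y).
\]
Writing $\omega^k=M^kZ$, we get $\sum_k(\omega^k)\tran A\omega^k=Z\tran BZ$ with $B=\sum_k(M^k)\tran AM^k$. Hence
\[
\lim_{\alpha\downarrow0}\E[\Var(\cv_\alpha\mid Y)]=\tfrac{4}{K^2}\,\E\,\Var(Z\tran BZ\mid A).
\]
Conditioning further on $M$, we have $\Var(Z\tran BZ\mid A,M)=2\sigma^4\operatorname{tr}(B_s^2)$ with $B_s$ the symmetric part of $B$, and $\E(Z\tran BZ\mid A,M)=\sigma^2\operatorname{tr}B=\sigma^2K\operatorname{tr}A$ by co-isometry. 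The latter is free of $M$, so the limit equals $\tfrac{8\sigma^4}{K^2}\E\operatorname{tr}(B_s^2)$.

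\textbf{Step 2 (lower bound for every $M$).} Set $C_{kl}=M^k(M^l)\tran$, so that $C_{kk}=I_n$ and $\sum_{l\neq k}C_{kl}=-I_n$. Since linear $g(y)=Ay$ lies in $\mathcal G$ after normalisation, the supremum over $g$ is at least an average over random symmetric $A$ with $\E\|A\|_F^2=1$, drawn from a GOE-type law. For such $A$,
\[
\E_A\operatorname{tr}(B^2)=\sum_{k,l}\E_A\operatorname{tr}(AC_{kl}AC_{kl}\tran)=c_1\sum_{k,l}\{\operatorname{tr}C_{kl}\}^2+c_2\sum_{k,l}\|C_{kl}\|_F^2 .
\]
The diagonal terms are fixed. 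For the off-diagonal terms, Cauchy--Schwarz applied to $\sum_{l\neq k}C_{kl}=-I_n$ gives
\[
\sum_{l\ne k}(\operatorname{tr}C_{kl})^2\ge \frac{n^2}{K-1},\qquad \sum_{l\ne k}\|C_{kl}\|_F^2\ge \frac{n}{K-1},
\]
with equality iff $C_{kl}=-I_n/(K-1)$ for all $k\neq l$. Plugging these in should give exactly $\E\operatorname{tr}(B_s^2)\ge K^2/(K-1)$, i.e.\ the bound $8\sigma^4/(K-1)$. I will check the constants $c_1,c_2$ against the normal case to confirm this.

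\textbf{Step 3 (attainment and uniqueness).} For Example~\ref{ex:normal}, $C_{kl}=-I_n/(K-1)$ deterministically. Then $B_s$ can be computed directly, and the bound $\operatorname{tr}(B_s^2)\le \tfrac{K^2}{K-1}\|A\|_F^2$ holds for every $A$, not just on average. I expect this to follow from the $\omega^k$ being $K/(K-1)$-scaled centred i.i.d.\ Gaussians, with the symmetric part only decreasing the Frobenius norm. Hence the supremum equals the lower bound and the infimum over $\mathcal M$ is attained.

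Conversely, if $M$ attains the value, equality in Step 2 forces $C_{kl}=-I_n/(K-1)$ a.s. Then, conditionally on $M$, the stacked vector is $\calN(0,\sigma^2\Sigma)$ with the fixed equicorrelated $\Sigma$, so it is jointly normal. For the other direction, a jointly normal stacked vector is a mixture over $M$ of centred normals $\calN(0,\sigma^2\Sigma_M)$ with $\E\Sigma_M=\Sigma$. Matching fourth moments (equivalently, the characteristic function) forces $\Sigma_M=\Sigma$ a.s., so the cross-blocks are deterministic and the value is attained.

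\textbf{Main obstacle.} The main obstacle is Step 2's choice of averaging law over $A$. It must make the bound tight, with the $\operatorname{tr}C_{kl}$ and $\|C_{kl}\|_F$ contributions combining to exactly $K^2/(K-1)$, and it must handle the non-symmetric parts of $C_{kl}$. If the GOE average is not sharp, I would instead test the supremum with specific rank-one symmetric $A=uu\tran$ averaged over uniformly random $u$, which yields the same Cauchy--Schwarz structure.
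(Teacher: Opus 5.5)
Your Step 1 is the paper's reduction (your $B_s$ is the paper's $S$). The upper bound for Example~\ref{ex:normal} and the fourth-moment argument for joint normality in Step 3 are also fine. The gap is in Step 2, and it is the non-symmetry you flagged, but it cannot be fixed by adjusting constants.

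For symmetric Gaussian $A$ with $\E A_{ij}A_{kl}=s^2(\delta_{ik}\delta_{jl}+\delta_{il}\delta_{jk})$ and $s^2=1/\{n(n+1)\}$, one gets $\E_A\operatorname{tr}(ACAC\tran)=s^2\{(\operatorname{tr}C)^2+\operatorname{tr}(C^2)\}$. This contains $\operatorname{tr}(C^2)=\|C_{\rm sym}\|_{\mathrm F}^2-\|C_{\rm skew}\|_{\mathrm F}^2$, not $\|C\|_{\mathrm F}^2$, and it can be negative, so your Frobenius Cauchy--Schwarz bound does not apply.

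The GOE average genuinely fails. Take the cyclic-rotation scheme of Example~\ref{ex:rotations} with $K=3$, $n=2$. Every $C_{kl}$ with $k\neq l$ is a rotation by $\pm2\pi/3$, so $\operatorname{tr}C_{kl}=-1$ and $\operatorname{tr}(C_{kl}^2)=-1$. The off-diagonal GOE contributions therefore vanish, and the averaged limiting R-VAR equals $8\sigma^4/3<4\sigma^4=8\sigma^4/(K-1)$.

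Your rank-one fallback $A=uu\tran$ does give a valid lower bound, via pointwise Cauchy--Schwarz on $\sum_{l\neq k}u\tran C_{kl}u=-1$. However, $u\tran Cu$ sees only the symmetric part of $C$. Its equality case therefore forces only $(C_{kl})_{\rm sym}=-I_n/(K-1)$, not $C_{kl}=-I_n/(K-1)$. The same $K=3$ rotation scheme meets this exactly: rotations by $\pm2\pi/3$ have symmetric part $-I_2/2$. Yet that scheme is not jointly normal. So the claim in Step 3 that equality forces $C_{kl}=-I_n/(K-1)$ fails, and with it the ``only if'' direction.

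The missing idea is to test with a Jacobian proportional to the identity, $g_\star(y)=y/\sqrt n+b$, which is what the paper does. Then $\operatorname{tr}(AC_{kl}AC_{kl}\tran)=\|C_{kl}\|_{\mathrm F}^2/n$ is the full Frobenius norm, skew part included. The bound $\sum_{l\neq k}\|C_{kl}\|_{\mathrm F}^2\ge n/(K-1)$ that you already wrote then gives $\sup_{g}\ge 8\sigma^4/(K-1)$ for every $M\in\mathcal M$. Its equality case gives $C_{kl}=-I_n/(K-1)$ almost surely, which is $\E\|D\|_{\mathrm F}^2=0$ in the paper's notation. As a check, for the $K=3$ rotation scheme this single test function gives $8\sigma^4>4\sigma^4$, so that scheme is correctly excluded from attaining the minimax value.
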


\section{Extensions beyond smooth estimators}
\label{sec: nonsmooth}

\subsection{Variance for non-smooth estimators}

We extend the analysis to non-smooth $g$ with finitely many jump discontinuities. Specifically, we consider estimators of the form
\begin{equation}
\label{equ:nonsmooth:g}
g(y)
=
f_0(y)
+
\sum_{\ell=1}^m
a_\ell(y)\mathbf 1\{h_\ell(y)>0\}.
\end{equation}
Here, $f_0$ represents the smooth component of $g$, while each term $a_\ell(y)\mathbf{1}\{h_\ell(y)>0\}$ introduces a jump across the boundary $\{y:h_\ell(y)=0\}$.


\begin{assumption}[Piecewise-smooth predictors]
\label{ass:beyond smooth}
\begin{enumerate}[label=(\roman*)]
\item\label{ass:piecewise:regular} \textit{Regularity of the smooth components.}
The functions $f_0,a_1,\ldots,a_m$ satisfy the regularity conditions in Assumption~\ref{assump:weakdiff}.

\item \textit{Regularity of the thresholds.}
For each \(\ell=1,\ldots,m\), the function
\(h_\ell:\mathbb R^n\to\mathbb R\) is Lipschitz, with Lipschitz
constant \(L_\ell<\infty\).

\item \textit{Local boundary density and conditional jump moment.}
For each \(\ell\), there exists \(t_0>0\) such that \(h_\ell(Y)\) has a density
\(p_\ell\) on \((-t_0,t_0)\). Let
$
m_\ell(s)
=
\mathbb E\!\left[
\|a_\ell(Y)\|_2^2
\,\middle|\,
h_\ell(Y)=s
\right]$.
Assume that
\begin{equation}
\label{eq:bounded-jump}
\operatorname*{ess\,sup}_{|s|\le t_0}\; p_\ell(s) m_\ell(s)
<\infty.
\end{equation}
\end{enumerate}
\end{assumption}

Condition~\eqref{eq:bounded-jump} controls how often \(Y\) lies near a threshold boundary and the squared magnitude of the corresponding jump, thereby controlling the contribution of threshold-crossing events to the variance.

\begin{theorem}[Reducible variance for non-smooth predictions]
\label{thm:reducible-variance-nonsmooth}
Suppose that $(\omega^1,\ldots,\omega^K)$ satisfy Assumption~\ref{assump:normal-randomization}, that $g$ in \eqref{equ:nonsmooth:g} satisfies Assumption~\ref{ass:beyond smooth}, and that $\E[\|g(Y) - Y\|_2^2 ] > 0$.
Then, as $\alpha \downarrow 0$,
\[
\E\left[
\Var\{\cv_{\alpha}\mid Y\}
\right]= \begin{cases} 
O(\alpha^{-1/2}) & \text{if } \rho=-\frac{1}{K-1},\\[2pt]
\Theta(\alpha^{-1}) & \text{if } \rho>-\frac{1}{K-1}.
\end{cases}
\]
\end{theorem}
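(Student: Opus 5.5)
We expand each summand of $\cv_\alpha$ and isolate the parts of order $\alpha^{-1/2}$ and $\alpha^{-1}$. Write $\ep^k=\sqrt\alpha\,\omega^k$. Then
\[
\|g(Y+\ep^k)-Y+\alpha^{-1/2}\omega^k\|_2^2-\alpha^{-1}\|\omega^k\|_2^2
=\|g(Y+\ep^k)-Y\|_2^2+\tfrac{2}{\sqrt\alpha}\langle \omega^k,g(Y+\ep^k)-Y\rangle .
\]
We split the second term as
\[
\tfrac{2}{\sqrt\alpha}\langle\omega^k,g(Y)-Y\rangle
+\tfrac{2}{\sqrt\alpha}\langle\omega^k,g(Y+\ep^k)-g(Y)\rangle .
\]
We then handle the three resulting pieces of the fold average separately.

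\textbf{Leading term.} The leading term is $\frac{2}{K\sqrt\alpha}\langle\sum_k\omega^k,g(Y)-Y\rangle$. Conditionally on $Y$, its variance is
\[
\frac{4}{K^2\alpha}\,\sigma^2 K\{1+(K-1)\rho\}\,\|g(Y)-Y\|_2^2 ,
\]
using the equicorrelation in Assumption~\ref{assump:normal-randomization}. This is exactly the computation behind \Cref{thm:alpha to 0}, so we reuse it.

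\begin{itemize}
\item When $\rho>-1/(K-1)$, this gives a term of size $c\,\alpha^{-1}\E\|g(Y)-Y\|_2^2$ with $c>0$. The remaining terms will be shown to be $O(\alpha^{-1/2})$ in variance. Hence the variance of the sum is at least $(\sqrt{A}-\sqrt{B})^2$, where $A$ is the leading variance and $B$ the variance of the remainder. This yields $\Theta(\alpha^{-1})$, the upper bound following from $(\sqrt A+\sqrt B)^2$.
\item When $\rho=-1/(K-1)$, we have $\sum_k\omega^k=0$ almost surely and the term vanishes identically.
\end{itemize}

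\textbf{First-order remainder.} The term $\|g(Y+\ep^k)-Y\|_2^2$ has conditional variance bounded by $O(1)$ in expectation. This uses the fourth-moment conditions of Assumption~\ref{ass:beyond smooth}(i) for $f_0,a_\ell$, plus boundedness of the indicators, evaluated via a change of measure to $Z_{\alpha_0}$ as in \Cref{thm:alpha to 0}. Next, substitute decomposition \eqref{equ:nonsmooth:g} into $\frac{1}{\sqrt\alpha}\langle\omega^k,g(Y+\ep^k)-g(Y)\rangle$.

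\begin{itemize}
\item The smooth pieces, $f_0$ and the increments $a_\ell(Y+\ep^k)-a_\ell(Y)$ multiplying indicators, are controlled by weak differentiability. We have $g_s(Y+\ep)-g_s(Y)=\int_0^1\ep\cdot\nabla g_s(Y+t\ep)\,dt$, so the factor $\sqrt\alpha$ cancels $\alpha^{-1/2}$. The resulting second moment is $O(1)$ by the Jacobian moment bound, exactly as in the smooth case of \Cref{thm:alpha to 0}.
\item The genuinely new part is the jump term
\[
J^k_\ell=\tfrac{1}{\sqrt\alpha}\langle\omega^k,a_\ell(Y)\rangle\bigl[\mathbf 1\{h_\ell(Y+\ep^k)>0\}-\mathbf 1\{h_\ell(Y)>0\}\bigr].
\]
\end{itemize}
By the Cauchy--Schwarz inequality, it suffices to show $\E[(J^k_\ell)^2]=O(\alpha^{-1/2})$ for each $k,\ell$.

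\textbf{Jump term (main obstacle).} The indicator difference is nonzero only if $|h_\ell(Y)|\le L_\ell\sqrt\alpha\|\omega^k\|_2$, by the Lipschitz property. So
\[
\E[(J^k_\ell)^2]\le\alpha^{-1}\,\E\bigl[\|\omega^k\|_2^2\|a_\ell(Y)\|_2^2\,\mathbf 1\{|h_\ell(Y)|\le L_\ell\sqrt\alpha\|\omega^k\|_2\}\bigr].
\]
We condition on $\omega^k$, which is independent of $Y$, and then on $h_\ell(Y)=s$. For $L_\ell\sqrt\alpha\|\omega^k\|_2\le t_0$, the inner expectation is
\[
\int_{|s|\le L_\ell\sqrt\alpha\|\omega^k\|}p_\ell(s)m_\ell(s)\,ds\le 2C L_\ell\sqrt\alpha\|\omega^k\|_2 ,
\]
by \eqref{eq:bounded-jump}. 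Integrating over $\omega^k$ gives $\alpha^{-1}\cdot\sqrt\alpha\,\E\|\omega^k\|_2^3=O(\alpha^{-1/2})$.

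The event $L_\ell\sqrt\alpha\|\omega^k\|_2>t_0$ has Gaussian-tail probability. On it we bound by $\alpha^{-1}\E[\|\omega^k\|^2\mathbf 1\{\cdot\}]\,\E\|a_\ell(Y)\|^2$, which is $o(1)$ because the tail decays like $e^{-c/\alpha}$. Finiteness of $\E\|a_\ell(Y)\|^2$ follows from Assumption~\ref{ass:beyond smooth}(i).

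The delicate point is justifying the conditional-density step, namely that $m_\ell$ and $p_\ell$ combine into the integral above. This is where the condition on the product $p_\ell m_\ell$, rather than on each factor separately, is used.

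Combining the pieces: under antithetic randomization, all surviving terms have second moment $O(\alpha^{-1/2})$ or $O(1)$, giving $O(\alpha^{-1/2})$. Under $\rho>-1/(K-1)$, the $\Theta(\alpha^{-1})$ leading term dominates, as argued above.
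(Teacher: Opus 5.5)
Your proposal is correct and follows the paper's proof essentially step for step. You use the same split $\cv_\alpha=A_\varepsilon+B_\varepsilon+T_\varepsilon$ with $\varepsilon=\sqrt\alpha$, where $T_\varepsilon$ vanishes under the zero-sum constraint and otherwise contributes $c/\alpha$, and the same three-way decomposition of $g(Y+\varepsilon\omega)-g(Y)$, bounding the jump term through the Lipschitz inclusion $\{|h_\ell(Y)|\le L_\ell\varepsilon\|\omega\|_2\}$ and the bound on $p_\ell m_\ell$. The differences are cosmetic (a Gaussian-tail bound on $\{L_\ell\varepsilon\|\omega\|_2>t_0\}$ instead of the paper's Markov-type bound, and the reverse triangle inequality in $L_2$ instead of Cauchy--Schwarz on the conditional covariance), and both routes give the stated rates.
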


The proof is given in Appendix~\ref{prf:thm:reducible-variance-nonsmooth}. \Cref{thm:reducible-variance-nonsmooth} shows that, in the presence of jump discontinuities, the reducible variance of the antithetic scheme is no longer bounded. Nevertheless, it grows at rate \(O(\alpha^{-1/2})\), compared with \(\Theta(\alpha^{-1})\) for any non-antithetic scheme. This improvement requires no knowledge of the decomposition~\eqref{equ:nonsmooth:g}: computing \(\cv_\alpha\) requires only evaluations of \(g\), while the form~\eqref{equ:nonsmooth:g} is used solely in the analysis. \Cref{sec: numerical} confirms the two rates numerically for a ridge estimator whose coefficients are hard-thresholded to produce a sparse solution.

\subsection{Variance reduction by control variates}
\label{sec: control var}

The bound in \Cref{thm:reducible-variance-nonsmooth} still diverges as $\alpha\downarrow0$, due to the jump component of $g$. When the discontinuity boundaries of $g$ are known, this contribution can be removed by a control variate, restoring the bounded reducible variance of the smooth case.

For each $\ell\in[m]$, let
\[ 
r_\ell(\omega^k,Y) = \mathbb{E}\!\left[ \omega^k \mathbf{1} \bigl\{ h_\ell(Y+\sqrt{\alpha}\omega^k)>0 \bigr\} \,\middle|\, Y \right] - \omega^k \mathbf{1} \bigl\{ h_\ell(Y+\sqrt{\alpha}\omega^k)>0 \bigr\}.
\] 
Define the control variate
\begin{equation} \label{eqn: controlvariate} 
    \mathcal{V}_{\alpha} = \frac{2}{K\sqrt{\alpha}} \sum_{k=1}^K \sum_{\ell=1}^m \left\langle a_\ell(Y), r_\ell(\omega^k,Y) \right\rangle ,
\end{equation} 
and the adjusted estimator
\begin{equation*} 
    \widetilde{\cv}_{\alpha} = \cv_{\alpha} + \mathcal{V}_{\alpha}. 
\end{equation*} 
By construction, \(\mathbb E(\mathcal V_\alpha\mid Y)=0\). Hence
\(\widetilde{\cv}_\alpha\) and \(\cv_\alpha\) have the same conditional mean, and therefore the same bias and \(\mathrm{S\text{-}VAR}\); the adjustment affects only the reducible variance.

\begin{theorem}[Bounded reducible variance with control variates] 
\label{thm:controlvariate} 
Let $(\omega^1,\ldots,\omega^K)$ satisfy Assumption~\ref{assump:normal-randomization} for some fixed $\rho$. Let $g$ in~\eqref{equ:nonsmooth:g} satisfy the smooth-component conditions in Assumption~\ref{ass:piecewise:regular}, and assume that $ \mathbb{E}\left[ \|f_0(Y)-Y\|_2^2 \right] >0$. Then 
\[
\mathbb{E}\left[\Var\!\left(\widetilde{\cv}_{\alpha} \mid Y\right)\right] =O(1) \;\; \text{ as } \alpha \downarrow 0 \quad \text{ if and only if }\; \rho=-1/(K-1).
\]
\end{theorem}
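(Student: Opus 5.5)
We decompose each fold's summand into pieces that depend on $\omega^k$ in a controlled way, and track the orders in $\alpha$ of their conditional variances and covariances. Expanding $\|g(Y+\sqrt\alpha\omega^k)-Y+\omega^k/\sqrt\alpha\|^2-\alpha^{-1}\|\omega^k\|^2$ gives
\[
\|g(Y+\sqrt\alpha\omega^k)-Y\|_2^2+\tfrac{2}{\sqrt\alpha}\langle g(Y+\sqrt\alpha\omega^k)-Y,\omega^k\rangle .
\]
Write $g=f_0+J$, where $J(y)=\sum_\ell a_\ell(y)\mathbf 1\{h_\ell(y)>0\}$. Then
\[
J(Y+\sqrt\alpha\omega^k)=\sum_\ell a_\ell(Y+\sqrt\alpha\omega^k)\mathbf 1\{\cdot\}=\sum_\ell a_\ell(Y)\mathbf 1\{\cdot\}+\sum_\ell\{a_\ell(Y+\sqrt\alpha\omega^k)-a_\ell(Y)\}\mathbf 1\{\cdot\}.
\]
The term $\tfrac{2}{\sqrt\alpha}\sum_\ell\langle a_\ell(Y),\omega^k\mathbf 1\{h_\ell(Y+\sqrt\alpha\omega^k)>0\}\rangle$ is exactly cancelled by the control variate, up to its conditional mean $\tfrac{2}{\sqrt\alpha}\sum_\ell\langle a_\ell(Y),\E[\omega^k\mathbf 1\{\cdot\}\mid Y]\rangle$. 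This conditional mean is $Y$-measurable and identical across $k$ by exchangeability, so it contributes nothing to $\Var(\cdot\mid Y)$. Consequently
\[
\widetilde{\cv}_\alpha=\frac1K\sum_k\Big\{\tfrac{2}{\sqrt\alpha}\langle f_0(Y)-Y,\omega^k\rangle+R_k\Big\}+c(Y),
\]
where $c(Y)$ is $Y$-measurable and the remainder $R_k$ collects four pieces: $\|g(Y+\sqrt\alpha\omega^k)-Y\|^2$; $\tfrac{2}{\sqrt\alpha}\langle f_0(Y+\sqrt\alpha\omega^k)-f_0(Y),\omega^k\rangle$; $\tfrac{2}{\sqrt\alpha}\sum_\ell\langle a_\ell(Y+\sqrt\alpha\omega^k)-a_\ell(Y),\omega^k\rangle\mathbf 1\{\cdot\}$; and the negative of the centered conditional mean.

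\textbf{Sufficiency.} Let $\rho=-1/(K-1)$. The leading linear term vanishes identically because $\sum_k\omega^k=0$, so it suffices to show $\E[R_k^2]=O(1)$ uniformly in small $\alpha$ and then apply Cauchy--Schwarz to $\Var(\frac1K\sum_kR_k\mid Y)$. The weak-differentiability representation in Assumption~\ref{assump:weakdiff} lets us write the $f_0$ and $a_\ell$ increments divided by $\sqrt\alpha$ as $\int_0^1\nabla f(Y+t\sqrt\alpha\omega^k)\omega^k\,\rd t$. These are bounded in $L^2$ by the same moment argument used for Theorem~\ref{thm:alpha to 0}: $Y+t\sqrt\alpha\omega^k$ has law $\calN(\mu,(1+t^2\alpha)\sigma^2I)$, which is dominated in likelihood ratio by the $\alpha_0$-inflated law for $\alpha<\alpha_0$, so the Frobenius-norm moments at inflation $\alpha_0$ control them. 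The squared term $\|g(\cdot)-Y\|^2$ is bounded in $L^2$ by the fourth-moment assumptions on $f_0$ and $a_\ell$, since indicators are at most one. The indicators multiplying the $a_\ell$ increments are harmless, being bounded by one.

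\textbf{Necessity.} Let $\rho>-1/(K-1)$. The linear term $L=\frac{2}{K\sqrt\alpha}\langle f_0(Y)-Y,\sum_k\omega^k\rangle$ has conditional variance
\[
\frac{4}{K^2\alpha}\,\|f_0(Y)-Y\|^2\,\sigma^2K\{1+(K-1)\rho\},
\]
whose expectation is $c/\alpha$ with $c>0$ by the assumption $\E\|f_0(Y)-Y\|^2>0$. Since the remainder has $\E\Var(\cdot\mid Y)=O(1)$, the reverse triangle inequality for conditional standard deviations gives $\E\Var(\widetilde{\cv}_\alpha\mid Y)\ge(\sqrt{c/\alpha}-O(1))^2=\Theta(1/\alpha)$. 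Matching the upper bound $O(1/\alpha)$ follows from the same decomposition.

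\textbf{Main obstacle.} The delicate step is the $L^2$ control of the remainder terms that still involve indicators and the $a_\ell$ increments. In particular, the product of a weak-derivative integral with an indicator of a moving boundary must be handled without any assumption on $h_\ell$; here Theorem~\ref{thm:controlvariate} deliberately drops the boundary-density condition. The first approach is to bound each indicator by $1$ before applying Cauchy--Schwarz, so that no boundary regularity is needed. The remaining care is to verify that the centered conditional mean term is constant across $k$ given $Y$, so that it genuinely drops from the conditional variance and never requires an $\alpha^{-1/2}$ bound.
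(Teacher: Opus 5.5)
Your proposal is correct and follows essentially the same route as the paper. In both, the control variate cancels the jump terms up to a $Y$-measurable piece, leaving the $a_\ell$-increment terms with indicators bounded by one. The remainder is then uniformly bounded in $L_2$ via the weak-derivative representation and a Gaussian-shift moment bound. Finally, the linear term $\frac{2}{\sqrt\alpha}\langle\overline\omega,f_0(Y)-Y\rangle$ vanishes under antithetic randomization and otherwise contributes $c/\alpha$, which dominates.

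One small point. The $L_2$ bound on $\int_0^1(\omega^k)\tran\nabla f(Y+t\sqrt\alpha\,\omega^k)\,\omega^k\,\rd t$ requires handling the weight $\|\omega^k\|_2^4$ jointly with the shifted point. You can do this by conditioning $\omega^k$ on $Y+t\sqrt\alpha\,\omega^k$ and using a strict variance gap, as in the paper's Gaussian-shift lemma; a likelihood-ratio comparison of the marginal law alone does not suffice. Also, your ``fourth piece'' is identically zero, since the conditional-mean term is already $Y$-measurable.
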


The proof is given in Appendix~\ref{prf:thm:controlvariate}. Theorem~\ref{thm:controlvariate} shows that the control variate restores the bounded reducible variance of the smooth setting. Its implementation requires evaluating \[ \mathbb{E}\!\left[ \omega^k \mathbf{1} \bigl\{ h_\ell(Y+\sqrt{\alpha}\omega^k)>0 \bigr\} \,\middle|\, Y \right] \] for each threshold function. We now present an example in which this conditional expectation is available in closed form.

\noindent{\emph{A non-smooth ridge regression estimator with hard thresholding.}} \quad 
Let $X\in\mathbb R^{n\times p}$ be the fixed design matrix, and, for a regularization parameter $\lambda>0$, consider the ridge estimator
\begin{equation*}
\widehat\beta^{\mathrm{ridge}}_{\lambda}(y)= (X\tran X+\lambda I_p)^{-1}X\tran y.
\end{equation*}
To produce a sparse solution, we apply coordinatewise hard thresholding to the ridge estimator, yielding the non-smooth estimator
\begin{equation*}
\widehat\beta^{\mathrm{HT}}_{\lambda,\tau_j,j}(y)=\widehat\beta^{\mathrm{ridge}}_{\lambda,j}(y)
\Indc{
\left|\widehat\beta^{\mathrm{ridge}}_{\lambda,j}(y)\right|>\tau_j
},
\qquad j=1,\ldots,p,
\end{equation*}
where $\tau_j = c \times \mathrm{sd}(\widehat\beta^{\mathrm{ridge}}_{\lambda, j})$ for a fixed constant $c$, and $\mathrm{sd}(\widehat\beta^{\mathrm{ridge}}_{\lambda, j})$ is the standard deviation of the $j$-th coordinate of the ridge estimator.
Let $g^{\mathrm{HT}}_{\lambda}(y)=X\widehat\beta^{\mathrm{HT}}_{\lambda,\tau}(y)$.
This estimator belongs to the non-smooth class in
\eqref{equ:nonsmooth:g}; Appendix~\ref{app:hard-thresholded-ridge-control-variate} derives the the control variate~\eqref{eqn: controlvariate} analytically and \Cref{sec: numerical} presents numerical results for this example.

\section{Numerical results}
\label{sec: numerical}

We consider a sparse linear model with normal errors,
$Y_{n\times1}=X_{n\times p}\beta_{p\times1}+\ep_{n\times1}$, $\ep\sim\calN(0,\sigma^2I_n)$,
with $n=200$, $p=50$, and $s=10$ nonzero coefficients drawn from $\mathrm{Unif}(-1,1)$.
The design matrix $X$ has independent standard normal entries; $X$ and $\beta$ are drawn once and held fixed across all replicates.
The error variance is set to $\sigma^2=\mathrm{Var}(X\beta)/2$, corresponding to a signal-to-noise ratio of $2$.
We estimate the prediction error of the ridge and hard-thresholded ridge estimators of Section~\ref{sec: control var} using $\cv_\alpha$ in \eqref{eqn: CVest} with $K=6$ folds and $\lambda=10$, taking $\tau_j=1.65\,\mathrm{sd}(\widehat\beta^{\mathrm{ridge}}_{\lambda,j})$.
Each simulation is repeated 100 times independently, with variation indicated by the error bands in the plot.
Code to reproduce the numerical results is available at \texttt{github.com/liusf15/Antithetic-CV/tree/optimality}.

Panel (a) of \Cref{fig:plot1} plots the reducible variance of $\cv_\alpha$ against $1/\alpha$ for the ridge estimator under four schemes: independent randomization, $\rho=0$; jointly normal randomization with $\rho=1/(K-1)$; and the antithetic schemes of \Cref{ex:rademacher} and \Cref{ex:normal}, labelled ACV (Rademacher) and ACV (Normal).
The reducible variance grows linearly in $1/\alpha$ under the two non-antithetic schemes but remains bounded under both antithetic ones, as predicted by \Cref{thm:alpha to 0}. ACV (Normal) consistently achieves lower reducible variance than ACV (Rademacher), corroborating \Cref{thm:minimax}.
Panel (b) repeats the comparison for the hard-thresholded ridge estimator, replacing ACV (Rademacher) by ACV (Adjusted), which combines ACV (Normal) with the control variate of Section~\ref{sec: control var}.
The linear growth under the non-antithetic schemes persists; ACV (Normal) grows far more slowly, consistent with the $O(\alpha^{-1/2})$ bound of \Cref{thm:reducible-variance-nonsmooth}, and ACV (Adjusted) remains bounded, as guaranteed by \Cref{thm:controlvariate}.
Appendix~\ref{app:additional:figures} reports the corresponding MSE.
Panel (c) fixes $\alpha=0.01$ and plots the estimated prediction error of the hard-thresholded ridge estimator against $\lambda$, with the true prediction error shown as a dotted line.
All schemes share the same expectation and their mean curves lie close to the truth, but independent randomization produces substantially wider uncertainty bands than the antithetic schemes.

\begin{figure}[h!]
    \centering
    \includegraphics[width = .95\linewidth]{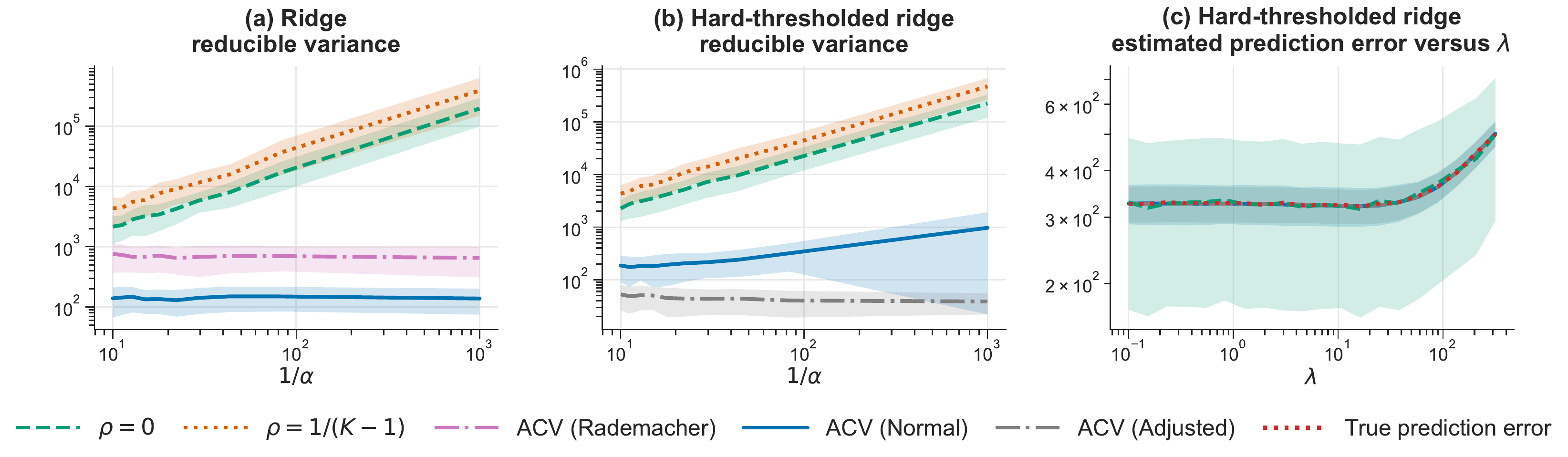}
    \caption{Reducible variance of $\cv_\alpha$ against $1/\alpha$ for ridge regression, panel (a), and for coordinatewise hard-thresholded ridge regression, panel (b); estimated and true prediction error of the hard-thresholded ridge estimator against $\lambda$, panel (c).
    }
    \label{fig:plot1}
\end{figure}

\bibliography{ref}

\appendix

\section{Proofs}
\subsection{Proof of Theorem~\ref{thm:alpha to 0}}
\label{prf:thm:alpha to 0}

\begin{lemma}[Small-$\alpha$ limit]
\label{lem:smooth-small-noise-limit}
Suppose that $g$ satisfies Assumption~\ref{assump:weakdiff}, and let
$\omega^1,\ldots,\omega^K$ be independent of $Y$, with
$\omega^k\sim\calN(0,\sigma^2I_n)$ marginally for every $k$. Define
\begin{align}
R_\alpha
&:=
\frac1K\sum_{k=1}^K
\left\{
\|Y-g(Y+\sqrt\alpha\,\omega^k)\|_2^2
+
2\int_0^1
(\omega^k)\tran
\nabla g(Y+t\sqrt\alpha\,\omega^k)
\omega^k\,\rd t
\right\},\label{equ:def:R_alpha} \\ 
R_0
&:=
\|Y-g(Y)\|_2^2
+
\frac2K\sum_{k=1}^K
(\omega^k)\tran\nabla g(Y)\omega^k.\notag
\end{align}
Then
\[
R_\alpha\longrightarrow R_0
\qquad\text{in }L_2
\quad\text{as }\alpha\downarrow0.
\]
Consequently,
\[
\operatorname{Var}(R_\alpha\mid Y)
\longrightarrow
\operatorname{Var}(R_0\mid Y)
\qquad\text{in }L_1.
\]
\end{lemma}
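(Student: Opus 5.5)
Since $R_\alpha$ and $R_0$ are averages over $k$, Minkowski's inequality reduces the claim to proving $L_2$ convergence of each fold's two summands separately. By exchangeability of the marginals it is enough to treat a single $\omega=\omega^k\sim\calN(0,\sigma^2I_n)$ independent of $Y$. We need
\[
\text{(a)}\quad \|Y-g(Y+\sqrt\alpha\,\omega)\|_2^2\to\|Y-g(Y)\|_2^2,\qquad
\text{(b)}\quad \int_0^1\omega\tran\nabla g(Y+t\sqrt\alpha\,\omega)\omega\,\rd t\to\omega\tran\nabla g(Y)\omega,
\]
both in $L_2$ as $\alpha\downarrow0$.

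\textbf{Step 1: uniform integrability from Assumption~\ref{assump:weakdiff}.} For $t\in[0,1]$ and $\alpha\le\alpha_0$, the variable $Y+t\sqrt\alpha\,\omega$ is $\calN(\mu,(1+t^2\alpha)\sigma^2I_n)$. Its density relative to $\calN(\mu,(1+\alpha_0)\sigma^2I_n)$ is bounded by a constant $C$ that does not depend on $t$ or $\alpha$. Hence
\[
\E\|g(Y+t\sqrt\alpha\,\omega)\|_2^4\le C\,\E\|g(Z_{\alpha_0})\|_2^4,\qquad
\E\|\nabla g(Y+t\sqrt\alpha\,\omega)\|_{\mathrm F}^2\le C\,\E\|\nabla g(Z_{\alpha_0})\|_{\mathrm F}^2.
\]
The same bounds hold for $\alpha=0$.

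\textbf{Step 2: continuity of translation.} For a function $f$ with $\E|f(Z_{\alpha_0})|^p<\infty$, I will show $f(Y+s\omega)\to f(Y)$ in $L_p$ as $s\to0$. Write $Y+s\omega\eqd \mu+\sigma\sqrt{1+s^2}\,\xi$ with $\xi$ standard normal. The joint law of $(Y,Y+s\omega)$ is a Gaussian coupling whose correlation tends to $1$. First approximate $f$ in $L_p$ of the $(1+\alpha_0)$-Gaussian measure by a bounded continuous $\tilde f$. The error terms are then controlled uniformly in $s$ by the density bound from Step 1. For $\tilde f$ itself, bounded convergence gives $\tilde f(Y+s\omega)\to\tilde f(Y)$ almost surely and in $L_p$.

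Applying this with $f=g$ and $p=4$ gives (a), since
\[
\bigl|\|Y-a\|^2-\|Y-b\|^2\bigr|\le\|a-b\|\,(2\|Y\|+\|a\|+\|b\|),
\]
and Cauchy--Schwarz then closes the argument with the fourth-moment bounds. For (b), I use Jensen in $t$ and Hölder to get
\[
\Bigl|\int_0^1\omega\tran\{\nabla g(Y+t\sqrt\alpha\omega)-\nabla g(Y)\}\omega\,\rd t\Bigr|^2\le\|\omega\|^4\int_0^1\|\nabla g(Y+t\sqrt\alpha\omega)-\nabla g(Y)\|_{\mathrm F}^2\,\rd t.
\]

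\textbf{Main obstacle.} Here $\nabla g$ has only second moments, while the factor $\|\omega\|^4$ is unbounded and dependent on the perturbation, so Step 2 with $p=2$ does not apply directly. I plan to truncate: on $\{\|\omega\|\le A\}$ the factor is at most $A^4$, and Step 2 applies conditionally on $\omega$, or through the approximation argument with $\omega$ in the coupling. On $\{\|\omega\|>A\}$, I will bound the term by conditioning on $\omega$. Given $\omega$, $Y+t\sqrt\alpha\omega$ is a shifted normal, and its density ratio relative to the $(1+\alpha_0)$ law is at most $C\exp(c\alpha\|\omega\|^2)$ for a constant $c$. Integrating $\|\omega\|^4 e^{c\alpha\|\omega\|^2}$ against the Gaussian density over $\{\|\omega\|>A\}$ gives a tail that vanishes as $A\to\infty$ uniformly for small $\alpha$. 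Letting $\alpha\downarrow0$ and then $A\to\infty$ gives (b).

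\textbf{Step 3: the consequence.} Note that $\operatorname{Var}(R\mid Y)=\E(R^2\mid Y)-\E(R\mid Y)^2$. We have
\[
|\E(R_\alpha^2-R_0^2\mid Y)|\le\E\bigl(|R_\alpha-R_0|\,|R_\alpha+R_0|\,\big|\,Y\bigr),
\]
whose expectation tends to $0$ by Cauchy--Schwarz and the boundedness of $\|R_\alpha\|_{L_2}$. Similarly,
\[
\bigl|\E(R_\alpha\mid Y)^2-\E(R_0\mid Y)^2\bigr|
\]
is controlled by Jensen's inequality for conditional expectation. Together these give $L_1$ convergence of the conditional variances.
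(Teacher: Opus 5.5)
Your proof is correct, and its skeleton matches the paper's. Both reduce to a single fold and prove Gaussian-shift continuity via a density-ratio bound against $\calN(\mu,(1+\alpha_0)\sigma^2I_n)$ plus approximation by continuous functions. Both apply Jensen in $t$ together with $|w\tran Aw|^2\le\|w\|_2^4\|A\|_{\mathrm F}^2$, and finish with Cauchy--Schwarz for the conditional variances. The paper phrases that last step through the centering map $X\mapsto X-\E[X\mid Y]$, which is equivalent to your split into $\E(R^2\mid Y)$ and $\E(R\mid Y)^2$.

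The genuine difference is how the weight $\|\omega\|_2^4$ is handled.

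\textbf{The paper's device.} Its Gaussian-shift lemma conditions on $U_r=Y+rW$ rather than on $W$. Since $W\mid U_r=u$ is Gaussian with mean $r(u-\mu)/(1+r^2)$, the weight becomes a polynomial $C_m(1+\|u-\mu\|_2^m)$. The density ratio $\rd\gamma_{r^2}/\rd\gamma_\eta$ absorbs this polynomial thanks to the strict variance gap $1+r^2\le 1+\eta/2<1+\eta$. This yields a single uniform weighted bound for every $(p,m)$, which the paper reuses in the non-smooth increment lemma.

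\textbf{Your device.} You condition on $\omega$. Given $\omega$, the shifted law $\calN(\mu+t\sqrt\alpha\,\omega,\sigma^2I_n)$ has density ratio at most $(1+\alpha_0)^{n/2}\exp\{\alpha\|\omega\|_2^2/(2\sigma^2\alpha_0)\}$ relative to $\gamma_{\alpha_0}$. This is integrable against the $\calN(0,\sigma^2I_n)$ tail, uniformly, for $\alpha\le\alpha_0/2$.

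This bound alone already gives $\sup_{r^2\le\alpha_0/2}\E[\|\omega\|_2^m\|h(Y+r\omega)\|^p]\le C\|h\|^p_{L_p(\gamma_{\alpha_0})}$. So your truncation at level $A$ is unnecessary: you can feed this weighted bound straight into your Step 2 approximation argument, exactly as the paper does. There the bounded-continuous part is dominated by $2^p\|\tilde f\|_\infty^p\|\omega\|_2^m$.

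The paper's route packages everything into one reusable lemma. Yours is slightly more hands-on but equally elementary.

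One wording point: the lemma assumes only identical marginals and independence from $Y$, not exchangeability. That is all your single-fold reduction actually uses.
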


\begin{proof}[Proof of \Cref{lem:smooth-small-noise-limit}]
For each $k$, let
\[
D_{\alpha,k}
:=
g(Y+\sqrt\alpha\,\omega^k)-g(Y).
\]
Applying \Cref{lem:gaussian-shift-continuity} with
$f=g$, $\eta=\alpha_0$, and $(p,m)=(4,0)$ gives
\[
\|D_{\alpha,k}\|_{L_4}\longrightarrow0.
\]
Therefore,
\begin{align*}
\left\|
\|Y-g(Y+\sqrt\alpha\,\omega^k)\|_2^2
-
\|Y-g(Y)\|_2^2
\right\|_{L_2}&=\left\| \|Y - g(Y) - D_{\alpha,k}\|_2^2 - \|Y - g(Y)\|_2^2 \right\|_{L_2}\\
&= \left\|- 2 \langle Y-g(Y), D_{\alpha,k}\rangle + \|D_{\alpha,k}\|_2^2  \right\|_{L_2}\\
&\leq 2\left\|  \|Y-g(Y)\|_2\| D_{\alpha,k}\|_2 \right\|_{L_2} + \left\| \|D_{\alpha,k}\|_2^2 \right\|_{L_2}\\
&\leq 2\|Y-g(Y)\|_{L_4}\|D_{\alpha,k}\|_{L_4}
+
\|D_{\alpha,k}\|_{L_4}^2\\
&\longrightarrow0.
\end{align*}

Next, define
\[
E_{\alpha,k}
:=
2\int_0^1
(\omega^k)\tran
\left\{
\nabla g(Y+t\sqrt\alpha\,\omega^k)-\nabla g(Y)
\right\}
\omega^k\,\rd t.
\]
By Jensen's inequality and
$|w^\top A w|^2\leq\|w\|_2^4\|A\|_{\mathrm F}^2$,
\begin{align*}
\|E_{\alpha,k}\|_{L_2}^2
&\leq
4\int_0^1
\mathbb E\!\left[
\|\omega^k\|_2^4
\left\|
\nabla g(Y+t\sqrt\alpha\,\omega^k)-\nabla g(Y)
\right\|_{\mathrm F}^2
\right]\rd t\\
&\leq
4\sup_{0\leq r\leq\sqrt\alpha}
\mathbb E\!\left[
\|\omega^k\|_2^4
\left\|
\nabla g(Y+r\omega^k)-\nabla g(Y)
\right\|_{\mathrm F}^2
\right].
\end{align*}
The last expression converges to zero by
\Cref{lem:gaussian-shift-continuity}, applied with
$f=\nabla g$, $\eta=\alpha_0$, and $(p,m)=(2,4)$.

Moreover,
\[
\mathbb E\left[
\left|
(\omega^k)\tran\nabla g(Y)\omega^k
\right|^2
\right]
\leq
\mathbb E\|\omega^k\|_2^4\,
\mathbb E\|\nabla g(Y)\|_{\mathrm F}^2
<\infty,
\]
where we used the independence of $\omega^k$ and $Y$. Thus $R_0\in
L_2$. Since $K$ is fixed, applying triangle inequality yields
\[
\|R_\alpha-R_0\|_{L_2}\longrightarrow0.
\]

It remains to establish the conditional-variance conclusion. Let
\[
QX:=X-\mathbb E[X\mid Y],
\]
and thus
\[
\operatorname{Var}(X\mid Y)
=
\mathbb E[(QX)^2\mid Y].
\]
For any scalar $X,Z\in L_2$,
\begin{align*} 
    \left\| \operatorname{Var}(X\mid Y) - \operatorname{Var}(Z\mid Y) \right\|_{L_1} &=\E\left| \E[ (QX)^2 - (QZ)^2 \mid Y ] \right| \\ 
    &\leq \mathbb E\left| (QX)^2-(QZ)^2 \right|\\ 
    &\leq \|Q(X-Z)\|_{L_2} \left\{ \|QX\|_{L_2}+\|QZ\|_{L_2} \right\}\\ 
    &\leq \|X-Z\|_{L_2} \left\{ \|X\|_{L_2}+\|Z\|_{L_2} \right\}. 
\end{align*}
Taking $X=R_\alpha$ and $Z=R_0$ proves convergence in $L_1$.
\end{proof}

\begin{proof}[Proof of \Cref{thm:alpha to 0}]
Let $\overline\omega:=\frac1K\sum_{k=1}^K\omega^k$.
Using weak differentiability of $g$,
\[
g(Y+\sqrt\alpha\,\omega^k)-g(Y)
=
\sqrt\alpha\int_0^1
\nabla g(Y+t\sqrt\alpha\,\omega^k)\omega^k\,\rd t.
\]
We can write
\begin{align*}
\cv_\alpha
&=
\frac1K\sum_{k=1}^K
\left\{
\|Y-g(Y+\sqrt\alpha\,\omega^k)\|_2^2
+
\frac{2}{\sqrt\alpha}
(\omega^k)\tran
\bigl(g(Y+\sqrt\alpha\,\omega^k)-Y\bigr)
\right\}=
R_\alpha+T_\alpha,
\end{align*}
where $R_\alpha$ is the random variable defined in
\Cref{equ:def:R_alpha} and
\[
T_\alpha
:=
\frac{2}{\sqrt\alpha}
\{g(Y)-Y\}\tran\overline\omega.
\]

By \Cref{lem:smooth-small-noise-limit},
$R_\alpha\to R_0$ in $L_2$. Hence there exist
$\alpha_1>0$ and a finite constant $C_R$ such that
\[
\sup_{0<\alpha\leq\alpha_1}
\mathbb E[R_\alpha^2]
\leq C_R.
\]
In particular,
\[
\sup_{0<\alpha\leq\alpha_1}
\mathbb E\!\left[\operatorname{Var}(R_\alpha\mid Y)\right]
\leq C_R.
\]

Under Assumption~\ref{assump:normal-randomization},
\begin{align*}
\operatorname{Var}(\overline\omega)
&=
\frac{1}{K^2}
\left\{
K\sigma^2I_n+
K(K-1)\rho\sigma^2I_n
\right\}=
\frac{\sigma^2}{K}
\{1+(K-1)\rho\}I_n.
\end{align*}

Suppose first that $\rho=-1/(K-1)$. Then
$\operatorname{Var}(\overline\omega)=0$, and since
$\mathbb E[\overline\omega]=0$, it follows that
$\overline\omega=0$ almost surely. Thus $T_\alpha=0$ almost surely, and
\[
\mathbb E\!\left[
\operatorname{Var}(\cv_\alpha\mid Y)
\right]
=
\mathbb E\!\left[
\operatorname{Var}(R_\alpha\mid Y)
\right]
\leq C_R.
\]
This proves the first case.

Suppose that
$\rho>-1/(K-1)$, we then have
$1+(K-1)\rho>0$. Independence of the randomization from $Y$ gives
\begin{align*}
\mathbb E\!\left[
\operatorname{Var}(T_\alpha\mid Y)
\right]
&=
\frac{4}{\alpha}
\mathbb E\!\left[
\{g(Y)-Y\}\tran
\operatorname{Var}(\overline\omega)
\{g(Y)-Y\}
\right]\\
&=
\frac{4\sigma^2}{\alpha K}
\{1+(K-1)\rho\}
\mathbb E\|g(Y)-Y\|_2^2\\
&=:\frac{c}{\alpha}.
\end{align*}
By the assumption that $\E[\|g(Y)- Y\|_2^2 ]>0$, $c>0$.
Applying Cauchy--Schwarz inequality yields
\begin{align*}
\left|
\mathbb E\!\left[
\operatorname{Cov}(R_\alpha,T_\alpha\mid Y)
\right]
\right|
&\leq
\left\{
\mathbb E\!\left[
\operatorname{Var}(R_\alpha\mid Y)
\right]
\mathbb E\!\left[
\operatorname{Var}(T_\alpha\mid Y)
\right]
\right\}^{1/2}\leq
\sqrt{C_Rc}\,\alpha^{-1/2}.
\end{align*}
Therefore,
\begin{align*}
\mathbb E\!\left[
\operatorname{Var}(\cv_\alpha\mid Y)
\right]
&=
\mathbb E\!\left[
\operatorname{Var}(R_\alpha\mid Y)
\right]
+
\frac{c}{\alpha}
+
2\mathbb E\!\left[
\operatorname{Cov}(R_\alpha,T_\alpha\mid Y)
\right]=\Theta(\frac{1}{\alpha}),\qquad \text{as }\alpha\downarrow0.
\end{align*}

\end{proof}

\subsection{Proof of Proposition~\ref{lem: recipe:AR}}
\label{prf:lem:recipe:AR}

\begin{proof}[Proof of \Cref{lem: recipe:AR}]

Exchangeability of $(M^k)$ implies the exchangeability of $(\omega^k)$.

Conditional on $M^k$, $\omega^k \sim \calN(0, \sigma^2 M^k(M^k)^\top) = \calN(0, \sigma^2 I_n)$ by co-isometry~\ref{cond:co-isometry}, so $\omega^k \sim \calN(0, \sigma^2 I_n)$ marginally. By the sum-zero constraint~\ref{cond:sum-zero-matrices}, $\sum_k \omega^k = (\sum_k M^k) Z = 0$. By exchangeability,
$$
0=\Var\Big(\sum_k\omega^k\Big)=K\sigma^2I_n+K(K-1)\operatorname{Cov}(\omega^1,\omega^2),
$$
which implies $\rho = - \frac{1}{K-1}$.
\end{proof}

\subsection{Proof of Theorem~\ref{thm:minimax}}
\label{prf:thm:minimax}

\begin{lemma}[Variance decomposition]
\label{lem:reducible:variance}
Consider the randomization variables $\omega^k=M^kZ$ satisfying
\Cref{lem: recipe:AR} and independent of $Y$. Define
\[
H_g(Y)
:=
\frac12\left\{
\nabla g(Y)+\nabla g(Y)\tran
\right\},\qquad
D
:=
M^1(M^2)\tran+\frac{1}{K-1}I_n.
\]
Then
\begin{align*}
\lim_{\alpha\downarrow0}
\mathbb E\left[
\operatorname{Var}(\cv_\alpha\mid Y)
\right]
&=
\frac{8\sigma^4}{K-1}
\mathbb E\|H_g(Y)\|_{\mathrm F}^2
+
\frac{8(K-1)\sigma^4}{K}
\mathbb E\left[
\operatorname{tr}
\left\{
H_g(Y)D H_g(Y)D\tran
\right\}
\right].
\end{align*}
\end{lemma}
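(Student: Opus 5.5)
By \Cref{lem: recipe:AR} the scheme is antithetic, so $\overline\omega=0$ almost surely. As in the proof of \Cref{thm:alpha to 0}, this gives $T_\alpha=0$ and hence $\cv_\alpha=R_\alpha$. \Cref{lem:smooth-small-noise-limit} then gives $\operatorname{Var}(R_\alpha\mid Y)\to\operatorname{Var}(R_0\mid Y)$ in $L_1$. Therefore
\[
\lim_{\alpha\downarrow0}\E\left[\operatorname{Var}(\cv_\alpha\mid Y)\right]=\E\left[\operatorname{Var}(R_0\mid Y)\right],
\]
and everything reduces to an exact second-moment computation for $R_0$. Conditional on $Y$, the term $\|Y-g(Y)\|_2^2$ is constant. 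Moreover, $(\omega^k)\tran\nabla g(Y)\omega^k=(\omega^k)\tran H_g(Y)\omega^k$. Writing $H=H_g(Y)$ and $\omega^k=M^kZ$, we get
\[
\operatorname{Var}(R_0\mid Y)=\frac{4}{K^2}\operatorname{Var}\left(Z\tran S Z\,\middle|\, Y\right),\qquad S:=\sum_{k=1}^K (M^k)\tran H M^k .
\]

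Next I condition additionally on $M$, which is independent of $(Y,Z)$, and use the law of total variance. First, $\E[Z\tran SZ\mid Y,M]=\sigma^2\operatorname{tr}(S)$. By co-isometry, $\operatorname{tr}\{(M^k)\tran HM^k\}=\operatorname{tr}\{HM^k(M^k)\tran\}=\operatorname{tr}(H)$, so $\sigma^2\operatorname{tr}(S)=K\sigma^2\operatorname{tr}(H)$. This does not depend on $M$, so the between-$M$ part of the variance vanishes. Second, for Gaussian $Z$ and symmetric $S$, $\operatorname{Var}(Z\tran SZ\mid Y,M)=2\sigma^4\operatorname{tr}(S^2)$. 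Hence
\[
\E\left[\operatorname{Var}(R_0\mid Y)\right]=\frac{8\sigma^4}{K^2}\,\E\operatorname{tr}(S^2).
\]

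I then expand $\operatorname{tr}(S^2)=\sum_{k,j}\operatorname{tr}(HC_{kj}HC_{kj}\tran)$ with $C_{kj}:=M^k(M^j)\tran$. The $K$ diagonal terms have $C_{kk}=I_n$ and each contributes $\operatorname{tr}(H^2)=\|H\|_{\mathrm F}^2$. For the $K(K-1)$ off-diagonal terms I use two facts. First, since $H$ is independent of $M$, exchangeability lets me replace every off-diagonal term in expectation by $\operatorname{tr}(HCHC\tran)$ with $C=M^1(M^2)\tran$. This also covers the transposed pair, because $\operatorname{tr}(HC\tran HC)=\operatorname{tr}(HCHC\tran)$ by cyclicity and symmetry of $H$. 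Second, I substitute $C=D-\frac{1}{K-1}I_n$ and expand:
\[
\operatorname{tr}(HDHD\tran)-\frac{2}{K-1}\operatorname{tr}(HDH)+\frac{1}{(K-1)^2}\operatorname{tr}(H^2).
\]

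The key step is to show that $\E[D]=0$, which kills the cross term after conditioning on $Y$. Multiplying the sum-zero identity $\sum_jM^j=0$ on the left by $M^1$ and on the right by transposes gives $\sum_{j}M^1(M^j)\tran=0$. Using co-isometry for the $j=1$ term, this becomes $\sum_{j\ne1}M^1(M^j)\tran=-I_n$. Exchangeability then gives $\E[M^1(M^2)\tran]=-I_n/(K-1)$, that is, $\E[D]=0$. Collecting terms,
\[
\frac{8\sigma^4}{K^2}\left[K+\frac{K(K-1)}{(K-1)^2}\right]\E\|H\|_{\mathrm F}^2=\frac{8\sigma^4}{K-1}\E\|H\|_{\mathrm F}^2,
\]
and the remaining piece is $\frac{8\sigma^4(K-1)}{K}\E\operatorname{tr}(HDHD\tran)$. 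This is the claimed formula.

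The main obstacle is justifying the interchanges of expectation (Fubini and conditioning). I would handle this with integrability: co-isometry gives $\|M^k\|_{\mathrm{op}}=1$, so $\|C\|_{\mathrm{op}}\le1$ and $\|D\|_{\mathrm{op}}\le 1+1/(K-1)$. Combined with $\E\|\nabla g(Y)\|_{\mathrm F}^2<\infty$ from Assumption~\ref{assump:weakdiff}, every term above is integrable.
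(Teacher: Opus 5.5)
Your proposal is correct and follows the paper's proof essentially step for step. You reduce to $\E[\operatorname{Var}(R_0\mid Y)]$ via the sum-zero constraint and \Cref{lem:smooth-small-noise-limit}, apply the law of total variance over $M$ using that $\operatorname{tr}(S)=K\operatorname{tr}(H)$ is deterministic, expand $\|S\|_{\mathrm F}^2$ through the products $M^j(M^k)\tran$, and remove the cross term with $\E[D]=0$ from sum-zero, co-isometry and exchangeability. Your explicit integrability check via the operator-norm bounds from co-isometry is a small addition that the paper leaves implicit.
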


\begin{proof}[Proof of \Cref{lem:reducible:variance}]
By the sum-zero condition $\frac1K\sum_{k=1}^K\omega^k=0$,
the term $T_\alpha$ in the proof of
\Cref{thm:alpha to 0} vanishes and $\cv_\alpha=R_\alpha$, where $R_\alpha$ 
is defined in \Cref{equ:def:R_alpha}. By
\Cref{lem:smooth-small-noise-limit},
\[
\operatorname{Var}(\cv_\alpha\mid Y)
\longrightarrow
\operatorname{Var}\Bigl(
\frac2K\sum_{k=1}^K
(\omega^k)\tran\nabla g(Y)\omega^k
\, \mid \,Y
\Bigr)
\]
in $L_1$. Consequently,
\begin{align}
\lim_{\alpha\downarrow0}
\mathbb E\!\left[
\operatorname{Var}(\cv_\alpha\mid Y)
\right]
=
\mathbb E\Bigl[
\operatorname{Var}\Bigl(
\frac2K\sum_{k=1}^K
(\omega^k)\tran H_g(Y)\omega^k
\,\mid \,Y
\Bigr)
\Bigr],
\label{equ:rvar:limit}
\end{align}
where we used $x^\top A x=x^\top(A+A^\top)x/2$.

Condition on $Y$ and write $H=H_g(Y)$. Thus $H$ is a fixed symmetric
matrix in the following calculation. Set
\[
S_k:=(M^k)\tran H M^k,
\qquad
S:=\sum_{k=1}^KS_k.
\]
Since $\omega^k=M^kZ$,
\[
\frac2K\sum_{k=1}^K(\omega^k)\tran H\omega^k
=
\frac2K Z\tran S Z.
\]
Conditional on $M$, the matrix $S$ is symmetric and
$Z\sim\calN(0,\sigma^2I_d)$. The law of total variance therefore gives
\begin{align*}
V(H)
&:=
\operatorname{Var}\left(
\frac2K\sum_{k=1}^K(\omega^k)\tran H\omega^k
\right)\\
&=
\frac4{K^2}
\left\{
\mathbb E\!\left[
\operatorname{Var}(Z\tran S Z\mid M)
\right]
+
\operatorname{Var}\left(
\mathbb E[Z\tran S Z\mid M]
\right)
\right\}.
\end{align*}
For a symmetric matrix $S$,
\[
\mathbb E[Z\tran S Z\mid M]
=
\sigma^2\operatorname{tr}(S),
\qquad
\operatorname{Var}(Z\tran S Z\mid M)
=
2\sigma^4\|S\|_{\mathrm F}^2.
\]
Moreover, the co-isometry condition
$M^k(M^k)\tran=I_n$ gives
\[
\operatorname{tr}(S)
=
\sum_{k=1}^K
\operatorname{tr}\left\{
H M^k(M^k)\tran
\right\}
=
K\operatorname{tr}(H).
\]
Thus $\operatorname{tr}(S)$ is deterministic and
\begin{equation}
V(H)
=
\frac{8\sigma^4}{K^2}
\mathbb E\|S\|_{\mathrm F}^2.
\label{eq:V-corrected}
\end{equation}

Let $P_{jk}:=M^j(M^k)\tran$.
Since $H$ is symmetric,
\begin{align*}
\|S\|_{\mathrm F}^2
&=
\sum_{j,k=1}^K\operatorname{tr}(S_jS_k)=
\sum_{j,k=1}^K
\operatorname{tr}
\left(
H P_{jk}H P_{jk}\tran
\right).
\end{align*}
Using $P_{jj}=I_n$ and exchangeability,
\begin{equation}
\mathbb E\|S\|_{\mathrm F}^2
=
K\|H\|_{\mathrm F}^2
+
K(K-1)
\mathbb E\operatorname{tr}
\left(
H P_{12}H P_{12}\tran
\right).
\label{eq:S2-corrected}
\end{equation}

The sum-zero condition implies
\[
\sum_{k\neq1}P_{1k}
=
M^1\left(\sum_{k\neq1}M^k\right)\tran
=
-M^1(M^1)\tran
=
-I_n.
\]
By exchangeability,
$\mathbb E[P_{12}]=-\frac1{K-1}I_n$.
Writing
\[
P_{12}
=
-\frac1{K-1}I_n+D,
\qquad
\mathbb E[D]=0,
\]
we obtain
\begin{align*}
\mathbb E\operatorname{tr}
\left(
H P_{12}H P_{12}\tran
\right)
&=
\frac1{(K-1)^2}\operatorname{tr}(H^2)
+
\mathbb E\operatorname{tr}
\left(
H D H D\tran
\right),
\end{align*}
because the two terms linear in $D$ have expectation zero.

Substituting into \eqref{eq:S2-corrected} gives
\[
\mathbb E\|S\|_{\mathrm F}^2
=
\frac{K^2}{K-1}\|H\|_{\mathrm F}^2
+
K(K-1)
\mathbb E\operatorname{tr}
\left(
H D H D\tran
\right).
\]
Combining this identity with \eqref{eq:V-corrected},
\begin{align*}
V(H)
&=
\frac{8\sigma^4}{K-1}\|H\|_{\mathrm F}^2+
\frac{8(K-1)\sigma^4}{K}
\mathbb E\operatorname{tr}
\left(
H D H D\tran
\right).
\end{align*}
Finally, substitute $H=H_g(Y)$ and take expectation over $Y$ in
\eqref{equ:rvar:limit}.
\end{proof}

\begin{proof}[Proof of \Cref{thm:minimax}]
For a construction $M$, define
\[
\mathcal V(M;g)
:=
\lim_{\alpha\downarrow0}
\mathbb E\!\left[
\operatorname{Var}(\cv_\alpha\mid Y)
\right].
\]
Let
\[
H_g(Y)
=
\frac12\left\{
\nabla g(Y)+\nabla g(Y)\tran
\right\},
\qquad
D
=
M^1(M^2)\tran+\frac1{K-1}I_n.
\]
By \Cref{lem:reducible:variance},
\begin{align*}
\mathcal V(M;g)
&=
\frac{8\sigma^4}{K-1}
\mathbb E\|H_g(Y)\|_{\mathrm F}^2
+
\frac{8(K-1)\sigma^4}{K}
\mathbb E\operatorname{tr}
\left\{
H_g(Y)D H_g(Y)D\tran
\right\}.
\end{align*}

Consider the test function
$g_\star(y)=\frac1{\sqrt n}y+b$ for some fixed $b\neq 0$,
which satisfies
\[
\nabla g_\star(y)
=
H_{g_\star}(y)
=
\frac1{\sqrt n}I_n,
\qquad
\|\nabla g_\star(y)\|_{\mathrm F}^2=1,\qquad \E[\|g_\star(Y)-Y\|_2^2]> 0.
\]
Therefore,
\begin{align*}
\mathcal V(M;g_\star)
&=
\frac{8\sigma^4}{K-1}
+
\frac{8(K-1)\sigma^4}{Kn}
\mathbb E\|D\|_{\mathrm F}^2.
\end{align*}
It follows that, for every admissible construction $M$,
\[
\sup_{g\in\mathcal{G}}
\mathcal V(M;g)
\geq
\frac{8\sigma^4}{K-1}
+
\frac{8(K-1)\sigma^4}{Kn}
\mathbb E\|D\|_{\mathrm F}^2
\geq
\frac{8\sigma^4}{K-1}.
\]
Consequently,
\[
\inf_{M\in\mathcal{M}}\;
\sup_{g\in\mathcal{G}}\;
\mathcal V(M;g)
\geq
\frac{8\sigma^4}{K-1}.
\]

For the jointly normal antithetic construction,
\[
M^j(M^k)\tran
=
-\frac1{K-1}I_n
\qquad
\text{for every }j\neq k,
\]
so $D=0$ almost surely. In that case,
\[
\mathcal V(M;g)
=
\frac{8\sigma^4}{K-1}
\mathbb E\|H_g(Y)\|_{\mathrm F}^2.
\]
Since
\[
\|H_g(Y)\|_{\mathrm F}
\leq
\|\nabla g(Y)\|_{\mathrm F},
\]
we have, under the constraint
$\mathbb E\|\nabla g(Y)\|_{\mathrm F}^2=1$,
\[
\mathcal V(M;g)
\leq
\frac{8\sigma^4}{K-1}.
\]
The function $g_\star$ attains equality. Hence
\[
\inf_{M\in\mathcal{M}}\;
\sup_{g\in\mathcal{G}}\;
\mathcal V(M;g)
=
\frac{8\sigma^4}{K-1}.
\]

It remains to characterize equality. If an admissible construction
satisfies
\[
\sup_{g\in\mathcal{G}}\;
\mathcal V(M;g)
=
\frac{8\sigma^4}{K-1},
\]
then the lower bound obtained using $g_\star$ forces
$\mathbb E\|D\|_{\mathrm F}^2=0$.
Thus
$M^1(M^2)\tran=-\frac1{K-1}I_n$ almost surely.
By exchangeability, the same identity holds almost surely for every
$j\neq k$. Conversely, if these identities hold, then $D=0$ and the
preceding argument shows that the minimax value is attained.

Finally, these identities are equivalent to
joint normality of $W=(\omega^1,\ldots,\omega^K)$. To see this, let
$\overline M$ denote the vertical stacking of $M^1,\ldots,M^K$ and set
\[
G_M:=\overline M\,\overline M\tran.
\]
Conditional on $M$,
\[
(\omega^1,\ldots,\omega^K)\mid M
\sim
\calN(0,\sigma^2G_M).
\]
If
$M^j(M^k)\tran=-I_n/(K-1)$ almost surely for all $j\neq k$, then
$G_M$ is deterministic, and hence the unconditional stacked vector is
jointly normal.

Conversely, suppose that the stacked vector is jointly normal.
Its covariance matrix is then
\begin{align*}
    \Cov[W] = \Cov[\E[W\mid M]] + \E[\Cov[W\mid M]] = 0 + \sigma^2 \E[G_M] = \sigma^2 \E[G_M],
\end{align*}
and therefore its characteristic function is $\exp\{-\frac{\sigma^2}{2}t\tran\mathbb E[G_M]t \}$.
On the other hand, conditional on $M$, $W\mid M$ is normal with covariance $\sigma^2 G_M$, so its characteristic function can be written as $\mathbb E\exp\{-\frac{\sigma^2}{2}t\tran G_Mt\}$.
Consequently, for every $t\in\mathbb R^{Kn}$,
\[
\mathbb E\exp\left\{
-\frac{\sigma^2}{2}t\tran G_Mt
\right\}
=
\exp\left\{
-\frac{\sigma^2}{2}t\tran\mathbb E[G_M]t
\right\}.
\]
Since $x\mapsto e^{-x}$ is strictly convex, equality in Jensen's
inequality implies that $t\tran G_Mt$ is almost surely constant.
The off-diagonal blocks of $G_M$ are therefore
$-I_n/(K-1)$ almost surely. This proves the claimed equivalence and
completes the proof.
\end{proof}

\subsection{Proof of Theorem~\ref{thm:reducible-variance-nonsmooth}}
\label{prf:thm:reducible-variance-nonsmooth}

\begin{proof}[Proof of \Cref{thm:reducible-variance-nonsmooth}]
Set $\varepsilon=\sqrt{\alpha}$,
$\overline\omega=\frac1K\sum_{k=1}^K\omega^k$.
We have
\begin{equation}
\label{eq:nonsmooth-general-decomposition}
\cv_\alpha
=
R_\varepsilon+T_\varepsilon,
\end{equation}
where
\begin{align*}
R_\varepsilon
&=
A_\varepsilon+B_\varepsilon,\qquad
A_\varepsilon=
\frac1K\sum_{k=1}^K
\|Y-g(Y+\varepsilon\omega^k)\|_2^2,\\
B_\varepsilon
&=
\frac{2}{K\varepsilon}
\sum_{k=1}^K
\left\langle
\omega^k,
g(Y+\varepsilon\omega^k)-g(Y)
\right\rangle,\qquad
T_\varepsilon
=
\frac{2}{\varepsilon}
\left\langle
\overline\omega,
g(Y)-Y
\right\rangle.
\end{align*}

As established in the proof of \Cref{thm:alpha to 0},
the moment assumptions and the Gaussian-shift bound imply
\[
\mathbb E[A_\varepsilon^2]=O(1).
\]
Moreover, by \Cref{lem:small-noise-increment},
\begin{align*}
\mathbb E[B_\varepsilon^2]
&\leq
\frac{4}{K\varepsilon^2}
\sum_{k=1}^K
\mathbb E\!\left[
\|\omega^k\|_2^2
\|g(Y+\varepsilon\omega^k)-g(Y)\|_2^2
\right]=
O(\varepsilon^{-1}).
\end{align*}
Consequently,
\begin{equation}
\label{eq:nonsmooth-regular-rvar}
\mathbb E\!\left[
\operatorname{Var}(R_\varepsilon\mid Y)
\right]
\leq
\mathbb E[R_\varepsilon^2]
=
O(\varepsilon^{-1}).
\end{equation}

Suppose first that
$\rho=-\frac{1}{K-1}$.
Then \(\overline\omega=0\) almost surely, so
\(T_\varepsilon=0\) almost surely. Hence,
by \eqref{eq:nonsmooth-general-decomposition} and
\eqref{eq:nonsmooth-regular-rvar},
\[
\mathbb E\!\left[
\operatorname{Var}(\cv_\alpha\mid Y)
\right]
=
O(\varepsilon^{-1})
=
O(\alpha^{-1/2}).
\]

Now suppose that
$\rho>-\frac{1}{K-1}$.
Assumption~\ref{assump:normal-randomization} gives
\[
\operatorname{Var}(\overline\omega)
=
\frac{\sigma^2}{K}
\{1+(K-1)\rho\}I_n.
\]
The assumption that $\E[\|g(Y)-Y\|^2]>0$ implies
\begin{equation}
\label{eq:nonsmooth-singular-rvar}
\mathbb E\!\left[
\operatorname{Var}(T_\varepsilon\mid Y)
\right]
=
\frac{c_\rho}{\varepsilon^2}
\end{equation}
for some constant \(c_\rho>0\).

The upper bound follows from
\[
\operatorname{Var}(X+Z\mid Y)
\leq
2\operatorname{Var}(X\mid Y)
+
2\operatorname{Var}(Z\mid Y),
\]
which, together with
\eqref{eq:nonsmooth-regular-rvar} and
\eqref{eq:nonsmooth-singular-rvar}, yields
\[
\mathbb E\!\left[
\operatorname{Var}(\cv_\alpha\mid Y)
\right]
=
O(\varepsilon^{-2}).
\]

For the lower bound, conditional Cauchy--Schwarz gives
\begin{align*}
\left|
\mathbb E\!\left[
\operatorname{Cov}
\left(
R_\varepsilon,T_\varepsilon\mid Y
\right)
\right]
\right|
&\leq
\left\{
\mathbb E[
\operatorname{Var}(R_\varepsilon\mid Y)]
\,
\mathbb E[
\operatorname{Var}(T_\varepsilon\mid Y)]
\right\}^{1/2}=
O(\varepsilon^{-3/2}).
\end{align*}
Therefore,
\begin{align*}
\mathbb E\!\left[
\operatorname{Var}(\cv_\alpha\mid Y)
\right]
&\geq
\mathbb E\!\left[
\operatorname{Var}(T_\varepsilon\mid Y)
\right]
-
2\left|
\mathbb E\!\left[
\operatorname{Cov}
\left(
R_\varepsilon,T_\varepsilon\mid Y
\right)
\right]
\right|\\
&\geq
\frac{c_\rho}{\varepsilon^2}
-
O(\varepsilon^{-3/2})
=
\Omega(\varepsilon^{-2}).
\end{align*}
Combining the upper and lower bounds gives
\[
\mathbb E\!\left[
\operatorname{Var}(\cv_\alpha\mid Y)
\right]
=
\Theta(\varepsilon^{-2})
=
\Theta(\alpha^{-1}),
\]
which completes the proof.
\end{proof}

\begin{lemma}[Small-noise increment bound]
\label{lem:small-noise-increment}
Let $\omega\sim \calN(0,\sigma^2 I_n)$ be independent of $Y$.
Under Assumption~\ref{ass:beyond smooth},
\[
\mathbb E\left[
\|\omega\|^2
\|g(Y+\ep\omega)-g(Y)\|^2
\right]
=
O(\ep),
\qquad \ep\downarrow0 .
\]
\end{lemma}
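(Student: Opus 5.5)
We decompose $g(Y+\ep\omega)-g(Y)$ according to \eqref{equ:nonsmooth:g} into a smooth part and jump parts:
\[
g(Y+\ep\omega)-g(Y)=\Delta_0+\sum_{\ell=1}^m\Delta_\ell,
\]
where
\[
\Delta_0=f_0(Y+\ep\omega)-f_0(Y)+\sum_{\ell}\{a_\ell(Y+\ep\omega)-a_\ell(Y)\}\mathbf 1\{h_\ell(Y+\ep\omega)>0\}
\]
and
\[
\Delta_\ell=a_\ell(Y)\bigl[\mathbf 1\{h_\ell(Y+\ep\omega)>0\}-\mathbf 1\{h_\ell(Y)>0\}\bigr].
\]
Since $m$ is fixed, $\|\cdot\|^2$ of the sum is bounded by $(m+1)$ times the sum of squares. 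It therefore suffices to show that $\E[\|\omega\|^2\|\Delta_0\|^2]=O(\ep^2)$ and that $\E[\|\omega\|^2\|\Delta_\ell\|^2]=O(\ep)$ for each $\ell$.

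\emph{Smooth part.} The indicators are bounded by one, so each term of $\Delta_0$ is dominated by an increment $f(Y+\ep\omega)-f(Y)$ with $f\in\{f_0,a_1,\dots,a_m\}$. By the weak-differentiability identity of Assumption~\ref{assump:weakdiff}, $f(Y+\ep\omega)-f(Y)=\ep\int_0^1\nabla f(Y+t\ep\omega)\omega\,\rd t$. Jensen's inequality then gives
\[
\E\bigl[\|\omega\|^2\|f(Y+\ep\omega)-f(Y)\|^2\bigr]\le\ep^2\sup_{0\le r\le\ep}\E\bigl[\|\omega\|^4\|\nabla f(Y+r\omega)\|_{\mathrm F}^2\bigr].
\]
The supremum is finite for small $\ep$ by the Gaussian-shift lemma (\Cref{lem:gaussian-shift-continuity} with $(p,m)=(2,4)$), exactly as in the proof of \Cref{lem:smooth-small-noise-limit}. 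This part contributes $O(\ep^2)$.

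\emph{Jump part.} This is the main step. The indicator difference is nonzero only if $h_\ell(Y)$ and $h_\ell(Y+\ep\omega)$ have different signs. The Lipschitz property gives $|h_\ell(Y+\ep\omega)-h_\ell(Y)|\le L_\ell\ep\|\omega\|$, so a sign change forces $|h_\ell(Y)|\le L_\ell\ep\|\omega\|$. Hence
\[
\E\bigl[\|\omega\|^2\|\Delta_\ell\|^2\bigr]\le\E\Bigl[\|\omega\|^2\,\|a_\ell(Y)\|^2\,\mathbf 1\{|h_\ell(Y)|\le L_\ell\ep\|\omega\|\}\Bigr].
\]
We condition on $\omega$, which is independent of $Y$, and split on the event $\{L_\ell\ep\|\omega\|<t_0\}$.

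On that event we condition further on $h_\ell(Y)=s$ and use the density on $(-t_0,t_0)$:
\[
\E\bigl[\|a_\ell(Y)\|^2\mathbf 1\{|h_\ell(Y)|\le L_\ell\ep\|\omega\|\}\,\big|\,\omega\bigr]=\int_{|s|\le L_\ell\ep\|\omega\|}p_\ell(s)m_\ell(s)\,\rd s\le 2L_\ell\ep\|\omega\|\,C_\ell,
\]
where $C_\ell$ is the essential supremum in \eqref{eq:bounded-jump}. Multiplying by $\|\omega\|^2$ and integrating yields $2L_\ell C_\ell\ep\,\E\|\omega\|^3=O(\ep)$.

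On the complementary event $\{\|\omega\|\ge t_0/(L_\ell\ep)\}$ we drop the indicator and bound the contribution by
\[
\E\|a_\ell(Y)\|^2\;\E\bigl[\|\omega\|^2\mathbf 1\{\|\omega\|\ge t_0/(L_\ell\ep)\}\bigr].
\]
Here $\E\|a_\ell(Y)\|^2<\infty$ by the fourth-moment condition, which also holds at $\alpha=0$ via the same Gaussian-shift argument. The Gaussian tail factor is $O(e^{-c/\ep^2})$, which is $o(\ep)$.

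The obstacle to watch is this conditioning step. One must justify writing the expectation as $\int p_\ell m_\ell$, which requires a regular conditional distribution and uses independence of $\omega$ and $Y$ so that the threshold $L_\ell\ep\|\omega\|$ can be treated as fixed. The step also needs the moment bounds on $a_\ell(Y)$ at the unshifted law, which follow from the assumption at the inflated variance. Combining the three pieces gives the claimed $O(\ep)$.
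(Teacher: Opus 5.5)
Your proposal is correct and follows essentially the same route as the paper: the same decomposition into smooth increments and indicator-jump terms, the Jensen plus Gaussian-shift bound giving $O(\ep^2)$ for the smooth part, and the Lipschitz containment of the crossing event in $\{|h_\ell(Y)|\le L_\ell\ep\|\omega\|\}$, split at $t_0$ and handled with the density bound \eqref{eq:bounded-jump}. The only difference is cosmetic: on $\{L_\ell\ep\|\omega\|>t_0\}$ you use an exponential Gaussian tail, whereas the paper uses the Markov-type bound $\mathbf 1\{L_\ell\ep\|\omega\|>t_0\}\le L_\ell\ep\|\omega\|/t_0$, and either gives $O(\ep)$.
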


\begin{proof}[Proof of Lemma~\ref{lem:small-noise-increment}]
Write
\[
I_\ell(y)=\mathbf 1\{h_\ell(y)>0\}.
\]
Then
\begin{align*}
g(Y+\ep\omega)-g(Y)
={}&
f_0(Y+\ep\omega)-f_0(Y)\\
&+
\sum_{\ell=1}^m
\{a_\ell(Y+\ep\omega)-a_\ell(Y)\}
I_\ell(Y+\ep\omega)\\
&+
\sum_{\ell=1}^m
a_\ell(Y)
\{I_\ell(Y+\ep\omega)-I_\ell(Y)\}.
\end{align*}

Since $f_0$ is weakly differentiable, we have
\[
f_0(Y+\ep\omega)-f_0(Y)
=
\ep
\int_0^1
\nabla f_0(Y+t\ep\omega)\omega\,\rd t.
\]
By Jensen's inequality and the Cauchy--Schwarz inequality, we have
\begin{align*}
    \|f_0(Y+\ep\omega)-f_0(Y)\|^2 &\leq \ep^2 \int_0^1 \|\nabla f_0(Y+t\ep\omega)\omega\|^2\,\rd t \\
    &\leq \ep^2 \int_0^1 \|\nabla f_0(Y+t\ep\omega)\|_{\rm F}^2\, \|\omega\|^2\,\rd t.
\end{align*}
Hence,
\begin{align*}
\mathbb E\left[
\|\omega\|^2
\|f_0(Y+\ep\omega)-f_0(Y)\|^2
\right] 
&\leq
\ep^2
\int_0^1
\mathbb E\left[
\|\omega\|^4
\|\nabla f_0(Y+t\ep\omega)\|_{\rm F}^2
\right]\rd t
=
O(\ep^2),
\end{align*}
where the last step follows from \Cref{lem:gaussian-shift-continuity}, \Cref{eq:uniform-gaussian-shift-bound} applied with $p=2,m=4,f=\nabla f_0$. The same argument gives
\[
\mathbb E\left[
\|\omega\|^2
\|a_\ell(Y+\ep\omega)-a_\ell(Y)\|^2
\right]
=
O(\ep^2),
\]
and thus
\begin{align*}
    \EE{\|\omega\|^2 \|(a_\ell(Y+\ep\omega)-a_\ell(Y)) I_\ell(Y+\ep\omega)\|^2} = O(\ep^2).
\end{align*}

Let \(L_\ell\) be the Lipschitz constant of \(h_\ell\), and define
\[
\mathcal C_{\ell,\ep}
=
\{I_\ell(Y+\ep\omega)\neq I_\ell(Y)\}.
\]
If \(\mathcal C_{\ell,\ep}\) occurs, then $h_\ell(Y)$ and $h_\ell(Y+\ep\omega)$ have opposite signs, and hence
\[
|h_\ell(Y)|
\leq
|h_\ell(Y+\ep\omega)-h_\ell(Y)|
\leq
L_\ell\ep\|\omega\|.
\]
Thus
\[
\mathcal C_{\ell,\ep}
\subseteq
\{|h_\ell(Y)|\leq L_\ell\ep\|\omega\|\}.
\]
We then have
\begin{align*}
\mathbb E\left[
\|\omega\|^2
\|a_\ell(Y)\|^2
\mathbf 1_{\mathcal C_{\ell,\ep}}
\right] &\leq \EE{\|\omega\|^2 \|a_\ell(Y)\|^2 \Indc{|h_\ell(Y)|\leq L_\ell \ep \|\omega\| } }\\
&=  \EE{\|\omega\|^2 \|a_\ell(Y)\|^2 \Indc{|h_\ell(Y)|\leq L_\ell \ep \|\omega\|, L_\ell \ep \|\omega\|\leq t_0 } }\\
&\qquad + \EE{\|\omega\|^2 \|a_\ell(Y)\|^2 \Indc{|h_\ell(Y)|\leq L_\ell \ep \|\omega\|, L_\ell \ep \|\omega\|> t_0 } }.
\end{align*}
On the event \(\{L_\ell\ep\|\omega\|\leq t_0\}\), 
by condition~\eqref{eq:bounded-jump}, 
\begin{align*}
\mathbb E\left[
\|a_\ell(Y)\|^2
\mathbf 1\{|h_\ell(Y)|\leq L_\ell \ep \|\omega\| \}
\mid \omega
\right]
&=
\int_{-L_\ell \ep \|\omega\|}^{L_\ell \ep \|\omega\|}
\mathbb E\{\|a_\ell(Y)\|^2\mid h_\ell(Y)=s\}
p_\ell(s)\,\rd s  \leq C L_\ell \ep \|\omega\|.
\end{align*}
Hence,
\begin{align*}
    \EE{\|\omega\|^2 \|a_\ell(Y)\|^2 \Indc{|h_\ell(Y)|\leq L_\ell \ep \|\omega\|, L_\ell \ep \|\omega\|\leq t_0 } } &= O(\ep).
\end{align*}
On the event \( \{L_\ell\ep\|\omega\|> t_0\} \),
\begin{align*}
&\EE{\|\omega\|^2 \|a_\ell(Y)\|^2 \Indc{|h_\ell(Y)|\leq L_\ell \ep \|\omega\|, L_\ell \ep \|\omega\|> t_0 } }\\
&\qquad \leq\mathbb E\|a_\ell(Y)\|^2
\mathbb E\left[
\|\omega\|^2
\mathbf 1\{L_\ell\ep\|\omega\|>t_0\}
\right] \\
&\qquad \leq \mathbb E\|a_\ell(Y)\|^2 \cdot \EE{\|\omega\|^3 \frac{L_\ell \ep}{t_0} } \\
&\qquad=O(\ep).
\end{align*}
This proves 
\begin{align*}
    \EE{\|\omega\|^2 \|a_\ell(Y)\|^2 \mathbf 1_{\mathcal C_{\ell,\ep}}} = O(\ep).
\end{align*}
\end{proof}

\subsection{Proof of Theorem~\ref{thm:controlvariate}}
\label{prf:thm:controlvariate}
\begin{proof}[Proof of \Cref{thm:controlvariate}]
Set $\varepsilon=\sqrt{\alpha}$,
$U_k=Y+\varepsilon\omega^k$,
and $\overline\omega=\frac1K\sum_{k=1}^K\omega^k$.
For \(k=1,\ldots,K\) and \(\ell=1,\ldots,m\), define
\[
I_{\ell k,\varepsilon}
=
\Indc{h_\ell(U_k)>0},\qquad
q_{\ell k,\varepsilon}(Y)
=
\mathbb E\!\left[
\omega^k I_{\ell k,\varepsilon}
\,\middle|\,
Y
\right].
\]
Thus,
\[
r_\ell(\omega^k,Y)
=
q_{\ell k,\varepsilon}(Y)
-
\omega^k I_{\ell k,\varepsilon}.
\]
We can write
\begin{align*}
\cv_\alpha
={}&
A_\varepsilon
+
\frac{2}{K\varepsilon}
\sum_{k=1}^K
\left\langle
\omega^k,f_0(U_k)-Y
\right\rangle +
\frac{2}{K\varepsilon}
\sum_{k=1}^K\sum_{\ell=1}^m
\left\langle
\omega^k,a_\ell(U_k)
\right\rangle
I_{\ell k,\varepsilon},
\end{align*}
where
\[
A_\varepsilon
=
\frac1K\sum_{k=1}^K
\|Y-g(U_k)\|_2^2.
\]

Adding the control variate yields
\begin{align*}
\widetilde{\cv}_{\alpha}
={}&
A_\varepsilon
+
\frac{2}{K\varepsilon}
\sum_{k=1}^K
\left\langle
\omega^k,f_0(U_k)-Y
\right\rangle 
+
\frac{2}{K\varepsilon}
\sum_{k=1}^K\sum_{\ell=1}^m
\left\langle
\omega^k,
a_\ell(U_k)-a_\ell(Y)
\right\rangle
I_{\ell k,\varepsilon}
+
D_\varepsilon(Y),
\end{align*}
where
\[
D_\varepsilon(Y)
=
\frac{2}{K\varepsilon}
\sum_{k=1}^K\sum_{\ell=1}^m
\left\langle
a_\ell(Y),
q_{\ell k,\varepsilon}(Y)
\right\rangle.
\]
In particular, \(D_\varepsilon(Y)\) is measurable with respect to
\(Y\).

By weak differentiability,
\begin{align*}
f_0(U_k)-f_0(Y)
&=
\varepsilon
\int_0^1
\nabla f_0(Y+t\varepsilon\omega^k)\omega^k
\,\rd t
\\
a_\ell(U_k)-a_\ell(Y)
&=
\varepsilon
\int_0^1
\nabla a_\ell(Y+t\varepsilon\omega^k)\omega^k
\,\rd t.
\end{align*}
Therefore,
\begin{equation}
\label{eq:control-variate-decomposition}
\widetilde{\cv}_{\alpha}
=
D_\varepsilon(Y)+R_\varepsilon+T_\varepsilon,
\end{equation}
where
\begin{align*}
R_\varepsilon
={}&
A_\varepsilon
+
\frac2K\sum_{k=1}^K
\int_0^1
(\omega^k)\tran
\nabla f_0(Y+t\varepsilon\omega^k)
\omega^k
\,\rd t
\notag\\
&+
\frac2K
\sum_{k=1}^K\sum_{\ell=1}^m
I_{\ell k,\varepsilon}
\int_0^1
(\omega^k)\tran
\nabla a_\ell(Y+t\varepsilon\omega^k)
\omega^k
\,\rd t
\end{align*}
and
\begin{equation*}
T_\varepsilon
=
\frac{2}{\varepsilon}
\left\langle
\overline\omega,
f_0(Y)-Y
\right\rangle.
\end{equation*}
Since \(D_\varepsilon(Y)\) is \(Y\)-measurable,
\begin{equation*}
\operatorname{Var}
\left(
\widetilde{\cv}_{\alpha}\mid Y
\right)
=
\operatorname{Var}
\left(
R_\varepsilon+T_\varepsilon\mid Y
\right).
\end{equation*}

Following the same Jensen and Gaussian-shift argument used to control
the regular term in the proof of \Cref{thm:alpha to 0}, applied
componentwise to \(f_0,a_1,\ldots,a_m\), and using
\(I_{\ell k,\varepsilon}\leq 1\), there exist
\(\varepsilon_0>0\) and \(C_R<\infty\) such that
\begin{equation}
\label{eq:control-variate-regular-bound}
\sup_{0<\varepsilon\leq\varepsilon_0}
\mathbb E[R_\varepsilon^2]
\leq C_R.
\end{equation}
Consequently,
\[
\sup_{0<\varepsilon\leq\varepsilon_0}
\mathbb E\!\left[
\operatorname{Var}(R_\varepsilon\mid Y)
\right]
\leq C_R.
\]

The remainder of the argument is identical to the necessity and
sufficiency argument in the proof of \Cref{thm:alpha to 0}, with
\(g(Y)-Y\) replaced by \(f_0(Y)-Y\). Indeed,
$\operatorname{Var}(\overline\omega)= \frac{\sigma^2}{K}
\{1+(K-1)\rho\}I_n$.
If $\rho=-\frac{1}{K-1}$,
then \(\overline\omega=0\) almost surely, and hence
\(T_\varepsilon=0\) almost surely. Since \(D_\varepsilon(Y)\) is
\(Y\)-measurable, \eqref{eq:control-variate-decomposition} and
\eqref{eq:control-variate-regular-bound} give
\[
\mathbb E\!\left[
\operatorname{Var}
\left(
\widetilde{\cv}_{\alpha}\mid Y
\right)
\right]
=
\mathbb E\!\left[
\operatorname{Var}(R_\varepsilon\mid Y)
\right]
\leq C_R.
\]
Conversely, if $\rho\neq-\frac{1}{K-1}$,
then the nondegeneracy condition $\E[\|f_0(Y) - Y\|_2^2 ]>0$ implies that
\[
\mathbb E\!\left[
\operatorname{Var}(T_\varepsilon\mid Y)
\right]
=
\frac{c_\rho}{\varepsilon^2}
\]
for some \(c_\rho>0\). By conditional Cauchy--Schwarz and
\eqref{eq:control-variate-regular-bound},
\[
\left|
\mathbb E\!\left[
\operatorname{Cov}
\left(
R_\varepsilon,T_\varepsilon\mid Y
\right)
\right]
\right|
=
O(\varepsilon^{-1}).
\]
Therefore,
\[
\mathbb E\!\left[
\operatorname{Var}
\left(
\widetilde{\cv}_{\alpha}\mid Y
\right)
\right]
\geq
\frac{c_\rho}{\varepsilon^2}
-
O(\varepsilon^{-1})
\longrightarrow\infty.
\]
Thus the reducible variance is \(O(1)\) as
\(\alpha=\varepsilon^2\downarrow0\) if and only if
\(\rho=-1/(K-1)\).
\end{proof}

\subsection{Technical lemma}

\begin{lemma}[Gaussian-shift continuity]
\label{lem:gaussian-shift-continuity}
Let $Y\sim\calN(\mu,\sigma^2I_n)$ and
$W\sim\calN(0,\sigma^2I_n)$ be independent, and set
$\gamma_\eta=\calN(\mu,(1+\eta)\sigma^2I_n)$ for $\eta>0$.
If $f:\mathbb R^n\to\mathbb R^d$ is measurable and
$f\in L_p(\gamma_\eta)$ for some $p\geq1$, then, for every fixed
$m\geq0$,
\[
\lim_{\varepsilon\downarrow0}\;
\sup_{0\leq r\leq\varepsilon}
\mathbb E\left[
\|W\|_2^m\|f(Y+rW)-f(Y)\|_2^p
\right]=0.
\]
The same conclusion holds for matrix-valued $f$, with the Frobenius norm.
\end{lemma}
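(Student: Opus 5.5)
The plan is to reduce to the Gaussian case by a density/truncation argument in two cases, the first for bounded $f$ and the second by approximation. \textbf{Step 1 (change of measure).} Write $\phi_s$ for the density of $\calN(\mu,s\sigma^2I_n)$. For fixed $w$ and $0\le r\le\varepsilon$, the shifted variable $Y+rw$ has density $\phi_1(\cdot-rw)$. I will bound the likelihood ratio $\phi_1(x-rw)/\phi_{1+\eta}(x)$. Completing the square shows that $\phi_1(x-rw)\le C_\eta\,e^{c r^2\|w\|^2}\phi_{1+\eta}(x)$ for constants depending only on $\eta$, uniformly in $x$. The reason is that the $\calN(\mu,\sigma^2)$ Gaussian has thinner tails than the $\calN(\mu,(1+\eta)\sigma^2)$ one, so the quadratic $-\|x-\mu-rw\|^2/2+\|x-\mu\|^2/\{2(1+\eta)\}$ is bounded above by $c\,r^2\|w\|^2$. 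Consequently
\[
\mathbb E\|f(Y+rW)\|_2^p\,\|W\|_2^m\le C_\eta\,\|f\|_{L_p(\gamma_\eta)}^p\,\mathbb E\bigl[\|W\|_2^m e^{c r^2\|W\|_2^2}\bigr].
\]
The last expectation is finite and bounded uniformly in $r\le\varepsilon$ once $\varepsilon$ is small enough that $c\varepsilon^2<1/(2\sigma^2)$. This gives the uniform bound (the one the paper later cites as \eqref{eq:uniform-gaussian-shift-bound}) and shows that the functional $f\mapsto\sup_{r\le\varepsilon}\mathbb E[\|W\|^m\|f(Y+rW)\|^p]$ is dominated by $\|f\|^p_{L_p(\gamma_\eta)}$.

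\textbf{Step 2 (approximation).} Continuous compactly supported functions are dense in $L_p(\gamma_\eta)$, so fix $\delta>0$ and choose $\tilde f\in C_c$ with $\|f-\tilde f\|_{L_p(\gamma_\eta)}\le\delta$. Split the difference as
\[
f(Y+rW)-f(Y)=\{f-\tilde f\}(Y+rW)+\{\tilde f(Y+rW)-\tilde f(Y)\}+\{\tilde f-f\}(Y),
\]
and use $\|a+b+c\|^p\le3^{p-1}(\|a\|^p+\|b\|^p+\|c\|^p)$. The first and third pieces are at most $C\delta^p$ uniformly in $r\le\varepsilon$, by Step 1; the case $r=0$ is covered by $\phi_1\le C\phi_{1+\eta}$. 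For the middle piece, $\tilde f$ is bounded and uniformly continuous, so $\|\tilde f(Y+rW)-\tilde f(Y)\|^p\le\min\{(2\|\tilde f\|_\infty)^p,\ \mathrm{mod}(\varepsilon\|W\|)^p\}$, where $\mathrm{mod}$ is the modulus of continuity. Multiplying by $\|W\|^m$ gives an integrable majorant independent of $r$, so dominated convergence makes its supremum over $r\le\varepsilon$ tend to $0$ as $\varepsilon\downarrow0$. Taking $\limsup_{\varepsilon\downarrow0}$ and then $\delta\downarrow0$ gives the claim. The matrix-valued case follows by applying the argument to the vectorization with the Frobenius norm.

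The main obstacle is Step 1: obtaining a likelihood-ratio bound that is uniform over all $x$ while keeping an integrable Gaussian factor in $w$. I would verify it by writing the exponent as a concave quadratic in $x$ and maximizing over $x$. This gives exactly $r^2\|w\|^2/(2\sigma^2\eta)$, which is integrable against the density of $W$ once $r^2<\eta$.
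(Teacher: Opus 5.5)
Your proof is correct and has the same structure as the paper's: a uniform bound $\sup_{r}\mathbb E[\|W\|_2^m\|h(Y+rW)\|_2^p]\le C\|h\|_{L_p(\gamma_\eta)}^p$, approximation of $f$ by $C_c$ functions, the three-term split with the factor $3^{p-1}$, and dominated convergence for the continuous piece, letting $\varepsilon\downarrow0$ before sending the approximation error to zero. The only difference is how you obtain the uniform bound. You condition on $W$ and bound the shifted density by $\phi_1(x-rw)\le(1+\eta)^{n/2}e^{r^2\|w\|^2/(2\sigma^2\eta)}\phi_{1+\eta}(x)$, which is integrable against the law of $W$ when $r^2<\eta$; the paper instead conditions on $U_r=Y+rW\sim\gamma_{r^2}$, bounds $\mathbb E(\|W\|_2^m\mid U_r)$ polynomially in $\|U_r-\mu\|_2$, and absorbs that polynomial into $\mathrm d\gamma_{r^2}/\mathrm d\gamma_\eta$ using the variance gap for $r^2\le\eta/2$. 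Both routes work.
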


\begin{proof}
For $r\geq0$, let $U_r=Y+rW$. Then
$U_r\sim\gamma_{r^2}$ and
\[
W\mid U_r=u\sim\calN\!\left(
\frac{r}{1+r^2}(u-\mu),
\frac{\sigma^2}{1+r^2}I_n
\right).
\]
Consequently, for $0\leq r\leq\sqrt{\eta/2}$,
\[
\mathbb E(\|W\|_2^m\mid U_r=u)
\leq C_m(1+\|u-\mu\|_2^m).
\]
The strict variance gap $1+r^2\leq1+\eta/2<1+\eta$ also gives
\[
\sup_{\substack{0\leq r\leq\sqrt{\eta/2}\\u\in\mathbb R^n}}
(1+\|u-\mu\|_2^m)
\frac{\mathrm d\gamma_{r^2}}{\mathrm d\gamma_\eta}(u)
\leq C_{m,\eta}.
\]
It follows that, for every $h\in L_p(\gamma_\eta)$,
\begin{equation}
\label{eq:uniform-gaussian-shift-bound}
\sup_{0\leq r\leq\sqrt{\eta/2}}
\mathbb E\!\left[
\|W\|_2^m\|h(Y+rW)\|_2^p
\right]
\leq C\|h\|_{L_p(\gamma_\eta)}^p.
\end{equation}
The same bound at $r=0$ controls
$\mathbb E[\|W\|_2^m\|h(Y)\|_2^p]$.

Choose $f_j\in C_c(\mathbb R^n;\mathbb R^d)$ such that
$f_j\to f$ in $L_p(\gamma_\eta)$. For fixed $j$, uniform continuity gives
\[
\sup_{0\leq r\leq\varepsilon}
\|f_j(Y+rW)-f_j(Y)\|_2^p\longrightarrow0
\quad\text{almost surely}.
\]
After multiplication by $\|W\|_2^m$, this is bounded by
$2^p\|f_j\|_\infty^p\|W\|_2^m$, so dominated convergence applies.
Finally, write $h_j=f-f_j$ and use
\[
\|f(Y+rW)-f(Y)\|_2^p
\leq 3^{p-1}\!\left
\{
\|h_j(Y+rW)\|_2^p
+\|f_j(Y+rW)-f_j(Y)\|_2^p
+\|h_j(Y)\|_2^p
\right\}.
\]
Taking the supremum over $0\leq r\leq\varepsilon$, applying
\eqref{eq:uniform-gaussian-shift-bound}, and then letting first
$\varepsilon\downarrow0$ and subsequently $j\to\infty$ proves the claim.
\end{proof}

\section{Control variate for hard-thresholded ridge regression}
\label{app:hard-thresholded-ridge-control-variate}

We derive the control variate associated with
hard-thresholded ridge estimator. Define
\begin{equation*}
C_{\lambda}
=
(X\tran X+\lambda I_p)^{-1}X\tran
\in\mathbb R^{p\times n}.
\end{equation*}
Let \(c_j\tran\) be the \(j\)th row of \(C_{\lambda}\), let
\(x_j=Xe_j\) be the \(j\)th column of \(X\), and set
\[
\mathcal J_{\lambda}
=
\{j\in\{1,\ldots,p\}:\|c_j\|_2>0\}.
\]
If \(j\notin\mathcal J_{\lambda}\), then \(c_j\tran y=0\) for every
\(y\), so the \(j\)th coordinate contributes nothing and may be
omitted.

\medskip\noindent
\emph{Representation in the non-smooth class.}
For \(j\in\mathcal J_{\lambda}\), define
\begin{equation*}
a_j(y)=x_j(c_j\tran y),
\qquad
h_{j,+}(y)=c_j\tran y-\tau_j,
\qquad
h_{j,-}(y)=-c_j\tran y-\tau_j.
\end{equation*}
Then
\begin{align}
\label{eq:hard-thresholded-ridge-piecewise}
g^{\mathrm{HT}}_{\lambda,\tau}(y)
&=
\sum_{j\in\mathcal J_{\lambda}}
a_j(y)
\left\{
\Indc{h_{j,+}(y)>0}
+
\Indc{h_{j,-}(y)>0}
\right\}
\notag\\
&=
\sum_{j\in\mathcal J_{\lambda}}
x_j(c_j\tran y)\Indc{|c_j\tran y|>\tau_j}.
\end{align}
Thus \(f_0=0\), and each coordinate contributes two threshold terms.

The maps \(a_j\) are linear, while the threshold functions
\(h_{j,+}\) and \(h_{j,-}\) are affine. Moreover,
\[
h_{j,+}(Y)
\sim
\calN(c_j\tran\mu-\tau_j,\sigma^2\|c_j\|_2^2),
\qquad
h_{j,-}(Y)
\sim
\calN(-c_j\tran\mu-\tau_j,\sigma^2\|c_j\|_2^2),
\]
so both threshold scores have bounded densities. Conditional on either
\(h_{j,+}(Y)=s\) or \(h_{j,-}(Y)=s\),
\begin{equation*}
\|a_j(Y)\|_2^2
=
\|x_j\|_2^2(s+\tau_j)^2.
\end{equation*}
Consequently, the conditional second moment in
\eqref{eq:bounded-jump} is bounded on every compact neighborhood of
zero, and the hard-thresholded ridge estimator satisfies
Assumption~\ref{ass:beyond smooth}.

\medskip\noindent
\emph{A Gaussian half-space identity.}
Let
\[
\omega\sim\calN(0,\sigma^2I_n),
\qquad
c\in\mathbb R^n\setminus\{0\},
\qquad
b\in\mathbb R,
\]
and set \(T=c\tran\omega\). Then
\[
T\sim\calN(0,\sigma^2\|c\|_2^2),
\qquad
\mathbb E(\omega\mid T)
=
\frac{c}{\|c\|_2^2}T.
\]
Let
\[
\varphi(u)
=
(2\pi)^{-1/2}\exp(-u^2/2)
\]
denote the standard normal density. For \(Z\sim\calN(0,1)\),
\[
\mathbb E\{Z\Indc{Z>u}\}
=
\varphi(u),
\qquad
\mathbb E\{Z\Indc{Z<u}\}
=
-\varphi(u).
\]
It follows that
\begin{align}
\label{eq:gaussian-upper-halfspace-moment}
\mathbb E\!\left[
\omega\Indc{c\tran\omega>b}
\right]
&=
\frac{\sigma c}{\|c\|_2}
\varphi\!\left(
\frac{b}{\sigma\|c\|_2}
\right),
\\
\label{eq:gaussian-lower-halfspace-moment}
\mathbb E\!\left[
\omega\Indc{c\tran\omega<b}
\right]
&=
-\frac{\sigma c}{\|c\|_2}
\varphi\!\left(
\frac{b}{\sigma\|c\|_2}
\right).
\end{align}

\medskip\noindent
\emph{Conditional moment for the threshold indicators.}
Fix \(j\in\mathcal J_{\lambda}\) and condition on \(Y\). Write
$z_j=c_j\tran Y$.
For the \(k\)th randomized training fold, define
\begin{align*}
I_{j,+,\alpha}^k
&=
\Indc{
c_j\tran(Y+\sqrt\alpha\,\omega^k)>\tau_j
}
=
\Indc{
c_j\tran\omega^k>
(\tau_j-z_j)/\sqrt\alpha
},
\\
I_{j,-,\alpha}^k
&=
\Indc{
c_j\tran(Y+\sqrt\alpha\,\omega^k)<-\tau_j
}
=
\Indc{
c_j\tran\omega^k<
(-\tau_j-z_j)/\sqrt\alpha
}.
\end{align*}

By \eqref{eq:gaussian-upper-halfspace-moment},
\begin{equation*}
q_{j,+,\alpha}(Y)
:=
\mathbb E\!\left[
\omega^k I_{j,+,\alpha}^k
\,\middle|\,
Y
\right]
=
\frac{\sigma c_j}{\|c_j\|_2}
\varphi\!\left(
\frac{\tau_j-z_j}
{\sigma\sqrt\alpha\,\|c_j\|_2}
\right).
\end{equation*}
Similarly, by \eqref{eq:gaussian-lower-halfspace-moment} and the
symmetry of \(\varphi\),
\begin{equation*}
q_{j,-,\alpha}(Y)
:=
\mathbb E\!\left[
\omega^k I_{j,-,\alpha}^k
\,\middle|\,
Y
\right]
=
-\frac{\sigma c_j}{\|c_j\|_2}
\varphi\!\left(
\frac{\tau_j+z_j}
{\sigma\sqrt\alpha\,\|c_j\|_2}
\right).
\end{equation*}

Because \(\tau_j>0\), the upper- and lower-threshold events are
disjoint. Define
\begin{equation*}
I_{j,\alpha}^k
=
I_{j,+,\alpha}^k+I_{j,-,\alpha}^k
=
\Indc{
|z_j+\sqrt\alpha\,c_j\tran\omega^k|>\tau_j
}
\end{equation*}
and
\begin{align*}
q_{j,\alpha}(Y)
&=
q_{j,+,\alpha}(Y)+q_{j,-,\alpha}(Y)
=
\frac{\sigma c_j}{\|c_j\|_2}
\left\{
\varphi\!\left(
\frac{\tau_j-z_j}
{\sigma\sqrt\alpha\,\|c_j\|_2}
\right)
-
\varphi\!\left(
\frac{\tau_j+z_j}
{\sigma\sqrt\alpha\,\|c_j\|_2}
\right)
\right\}.
\end{align*}
Thus,
\begin{equation}
\label{eq:hard-threshold-q-identity}
q_{j,\alpha}(Y)
=
\mathbb E\!\left[
\omega^k I_{j,\alpha}^k
\,\middle|\,
Y
\right].
\end{equation}

\medskip\noindent
\emph{Specialized control variate.}
In the representation
\eqref{eq:hard-thresholded-ridge-piecewise}, the jump-amplitude
function for both one-sided indicators is
$a_j(Y)=x_jz_j$.
Therefore, the two one-sided terms in the general control variate
\eqref{eqn: controlvariate} combine to give
\begin{equation}
\label{eq:hard-thresholded-ridge-control-variate}
\mathcal V^{\mathrm{HT}}_{\alpha}
=
\frac{2}{K\sqrt\alpha}
\sum_{k=1}^K
\sum_{j\in\mathcal J_{\lambda}}
\left\langle
x_jz_j,\,
q_{j,\alpha}(Y)-\omega^k I_{j,\alpha}^k
\right\rangle.
\end{equation}
The control-variate-adjusted estimator is
\begin{equation*}
\widetilde{\cv}^{\mathrm{HT}}_{\alpha}
=
\cv^{\mathrm{HT}}_{\alpha}
+
\mathcal V^{\mathrm{HT}}_{\alpha},
\end{equation*}
where \(\cv^{\mathrm{HT}}_{\alpha}\) denotes \eqref{eqn: CVest} with
\(g=g^{\mathrm{HT}}_{\lambda,\tau}\).

By \eqref{eq:hard-threshold-q-identity},
$\mathbb E\left[
\mathcal V^{\mathrm{HT}}_{\alpha}
\,\middle|\,
Y
\right]
=
0$.
Thus, the adjustment leaves the conditional expectation, and hence the
bias and S-VAR, unchanged.

For direct implementation, define
\begin{equation*}
u_{j,+,\alpha}
=
\frac{\tau_j-z_j}
{\sigma\sqrt\alpha\,\|c_j\|_2},
\qquad
u_{j,-,\alpha}
=
\frac{\tau_j+z_j}
{\sigma\sqrt\alpha\,\|c_j\|_2}.
\end{equation*}
Since \(a_j(Y)=x_jz_j\),
\begin{align*}
\left\langle
x_jz_j,q_{j,\alpha}(Y)
\right\rangle
&=
\frac{\sigma z_jx_j\tran c_j}{\|c_j\|_2}
\left\{
\varphi(u_{j,+,\alpha})
-
\varphi(u_{j,-,\alpha})
\right\},
\\
\left\langle
x_jz_j,\omega^kI_{j,\alpha}^k
\right\rangle
&=
z_j(x_j\tran\omega^k)I_{j,\alpha}^k.
\end{align*}
Consequently,
\eqref{eq:hard-thresholded-ridge-control-variate} can equivalently be
written as
\begin{align*}
\mathcal V^{\mathrm{HT}}_{\alpha}
={}&
\frac{2}{\sqrt\alpha}
\sum_{j\in\mathcal J_{\lambda}}
\frac{\sigma z_jx_j\tran c_j}{\|c_j\|_2}
\left\{
\varphi(u_{j,+,\alpha})
-
\varphi(u_{j,-,\alpha})
\right\}
-
\frac{2}{K\sqrt\alpha}
\sum_{k=1}^K
\sum_{j\in\mathcal J_{\lambda}}
z_j(x_j\tran\omega^k)I_{j,\alpha}^k.
\end{align*}
The calculation uses only the marginal law
\(\omega^k\sim\calN(0,\sigma^2I_n)\) and independence of
\(\omega^k\) from \(Y\); joint normality of
\((\omega^1,\ldots,\omega^K)\) is not required.

\section{Additional numerical results}
\label{app:additional:figures}

\Cref{fig:plot2} reports the MSE under the same simulation setup as the reducible-variance analysis in \Cref{sec: numerical}. We show only the antithetic schemes, since the non-antithetic schemes have substantially larger reducible variance and hence much larger MSE. The rankings mirror those in \Cref{fig:plot1}: ACV (Normal) outperforms ACV (Rademacher) for ridge regression (panel (a)), while the control-variate adjustment further reduces the MSE for hard-thresholded ridge regression (panel (b)). In panel (b), ACV (Normal) and ACV (Adjusted) have the same bias and \(\mathrm{S\text{-}VAR}\), so the difference in their MSE is entirely attributable to \(\mathrm{R\text{-}VAR}\). For ACV (Adjusted), the increase in MSE as \(\alpha\) decreases is driven by the increase in \(\mathrm{S\text{-}VAR}\), since its \(\mathrm{R\text{-}VAR}\) remains bounded and the bias vanishes as $\alpha\downarrow0$.

\begin{figure}[h!]
    \centering
    \includegraphics[width = .9\textwidth]{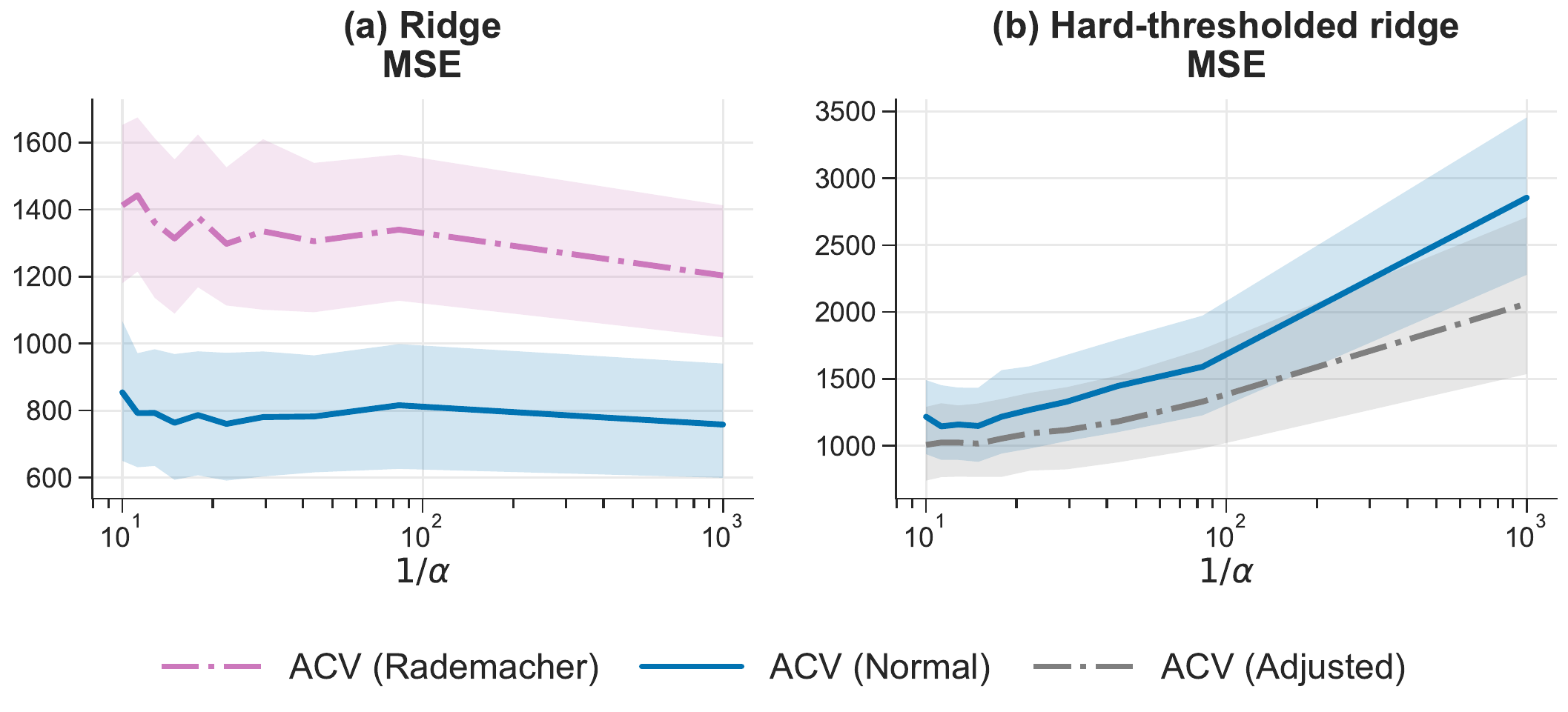}
    \caption{Cross-validation for ridge estimators with and without thresholding. Panel (a) plots the MSE of cross-validation estimators for ridge regression against $1/\alpha$. Panel (b) plots the MSE for coordinatewise hard-thresholded ridge regression against $1/\alpha$.}
    \label{fig:plot2}
\end{figure}

\end{document}